\begin{document}

\title{Acyclic Orientations and the Chromatic Polynomial of Signed Graphs}
\author[J. Gao]{Jiyang Gao}
\email{jgao@math.harvard.edu}
\date{\today}

\maketitle
\vspace{-0.5cm}
\begin{abstract}
% Analogous to the fact that the number of acyclic orientations of a graph with a unique sink is equal to, up to a sign, the linear coefficient of its chromatic polynomial, 
We present a new correspondence between acyclic orientations and coloring of a signed graph (symmetric graph). Goodall et al. introduced a bivariate chromatic polynomial $\chi_G(k,l)$ that counts the number of signed colorings using colors $0,\pm1,\dots,\pm k$ along with $l-1$ symmetric colors $0_1,\dots,0_{l-1}$. We show that the evaluation of the bivariate chromatic polynomial $|\chi_G(-1,2)|$ is equal to the number of acyclic orientations of the signed graph modulo the equivalence relation generated by swapping sources and sinks. We present three proofs of this fact, a proof using toric hyperplane arrangements, a proof using deletion-contraction, and a direct proof.
% We give a geometric interpretation of both numbers as the number of chambers in B-type toric hyperplane arrangement.
\end{abstract}

\section{Introduction}

In 1973, Richard Stanley \cite{Stanl_Acyclicorientations_73} found a simple yet astonishing connection between the chromatic polynomial and acyclic orientations of a graph. Let $\chi_G$ be the chromatic polynomial of $G$. He showed that $\chi_G(-1)$ is, up to a sign, equal to the number of acyclic orientations of $G$.

Later on, Greene and Zaslavsky \cite{Green_InterpretationWhitney_83} found a simple yet different connection between these two objects. Namely, they showed that the linear coefficient of $\chi_G$ is, up to a sign, equal to the number of acyclic orientations of $G$ with a unique fixed sink. Furthermore, this number equals an evaluation of the Tutte polynomial $T_G(1,0)$. This number also counts equivalence classes of acyclic orientations under the \emph{flip} move. Specifically, two acyclic orientations are flip equivalent to each other if we can convert a sink (or source) to a source (or sink) by reversing all edges adjacent to that vertex. This flip move was introduced by Pretzel \cite{Pretz_reorientinggraphs_86}, and further studied by many past research \cite{Propp_Latticestructure_02,Devel_Toricpartial_16}.

In this work, we generalize these results to symmetric graphs. Symmetric graphs are a variation of signed graphs introduced by Harary \cite{Harar_notionbalance_53} and further studied by Zaslavsky \cite{Zasla_Signedgraphs_82,Zasla_Signedgraphcol_82,Zasla_Chromaticinvariants_82}. In the literature, they are also called \emph{double covering graphs} in \cite{Zasla_Signedgraphs_82} or \emph{skew-symmetric graphs} in \cite{Baben_AcyclicBidirected_06,Tutte_AntisymmetricalDigraphs_67}. Simply speaking, symmetric graphs are graphs with an involution $\iota$ such that $uv$ is an edge if and only if $\iota(u)\iota(v)$ is an edge. Notions such as the chromatic polynomial and acyclic orientations are all generalizable to symmetric graphs (details in \cref{sec:prelim}). Our main result is that, the number of equivalence classes of acyclic orientations modulo certain flip moves, is equal to a specialization of the bivariate chromatic polynomial of symmetric graph introduced by \cite{Gooda_Tuttedichromate_21}.

There are several reasons why this result was not previously discovered. First, this number is not a simple evaluation of the Tutte polynomial of the graphical matroid of the symmetric graph (also known as the \emph{frame matroid} in \cite{Zasla_Signedgraphs_82}). In fact, it is an evaluation only if we consider the trivariate Tutte polynomial introduced recently by \cite{Gooda_Tuttedichromate_21}. Secondly, while the bivariate chromatic polynomial satisfies a simple deletion-contraction reccurence, there is no such obvious recurrence for the number of acyclic orientations modulo flip moves, which is different from the case of ordinary graphs. Therefore, a simple induction proof does not exist.

In fact, both colorings and acyclic orientations are deeply connected to the geometry of toric hyperplane arrangements, which were studied recently in \cite{Novik_SyzygiesOriented_00,Ehren_AffineToric_09,Deshp_Facecounting_19}. We will first introduce the basics of toric hyperplane arrangements in \cref{sec:toric}, and then in \cref{sec:graphical} we define the graphical toric hyperplane arrangement of $G$ introduced by \cite{Green_InterpretationWhitney_83}. Analogous to the results in \cite{Devel_Toricpartial_16}, we show that there is a bijection between acyclic orientations of $G$ modulo flip moves and connected chambers in the graphical toric hyperplane arrangement of $G$. From there, we prove our main results in two ways: a finite field method from \cite{Ehren_AffineToric_09} in \cref{sec:graphical}, and a deletion-restriction proof in \cref{sec:delcont}. Finally, we will include a direct proof of the result without the use of geometry in \cref{sec:directly}. The proof uses the reciprocity idea of Stanley's \cite{Stanl_Acyclicorientations_73}, and a direct induction.
\section{Preliminaries: Symmetric graphs}\label{sec:prelim}

\subsection{Signed Graphs and Symmetric Graphs}

A \emph{signed graph} $\Sigma=(\Gamma,\sigma)$ is a pair of an unsigned graph $\Gamma=(V,E)$ equipped with a function $\sigma$ which labels each edge (except an half edge) in $\Gamma$ positive or negative. The underlying unsigned graph $\Gamma$ may have not only links and loops but also \emph{half edges} (which has only one endpoint).

Although signed graph appears more often in literature, there is an alternative form of a signed graph which we will use below.

\begin{definition}
A \emph{symmetric graph} is a graph $G=(V,E)$ on vertices
\[V=\{v_{-n},v_{-(n-1)},\dots,v_0,v_1,\dots,v_{n}\}\]
such that if $(v_i,v_j)\in E$, then $(v_{-i},v_{-j})\in E$.
\end{definition}

There is a natural correspondence between signed graphs $\Sigma$ on $n$ vertices and symmetric graphs $G$ on $2n+1$ vertices. The correspondence is summarized in \cref{tab:signedsymmetric}. 
\begin{table}[ht]
\centering
\begin{tabular}{|c|c|}
\hline
Signed Graph $\Sigma$ & Symmetric Graph $G$\\ \hline\hline
Vertex $u_i$ & Vertex pair $\{v_i,v_{-i}\}$\\ \hline
Positive Link $u_iu_j$ & Edges $v_iv_j$ and $v_{-i}v_{-j}$\\ \hline
Negative Link $u_iu_j$ & Edges $v_iv_{-j}$ and $v_{-i}v_{j}$\\ \hline
Positive Loop $u_i$ & Loops $v_i$ and $v_{-i}$\\ \hline
Negative Loop $u_i$ & Edge $v_iv_{-i}$\\ \hline
Half edge $u_i$ & Edges $v_0v_i$ and $v_0v_{-i}$\\ \hline
\end{tabular}
\caption{Correspondence between a signed graph and a symmetric graph}
\label{tab:signedsymmetric}
\end{table}

\begin{example}
\cref{fig:1} shows a signed graph and the corresponding symmetric graph. The signed graph on the left contains a positive link, a negative link, and a half edge.
\begin{figure}[ht!]
\centering
\begin{subfigure}[c]{.4\linewidth}
\centering
\begin{tikzpicture}
    \SetFancyGraph
    \SetGraphUnit{3}
    \Vertex[L=$u_1$,Lpos=below]{1}
    \EA[L=$u_2$,Lpos=below](1){2}
    \EA[empty=true,unit=1.5](2){3}
    \Edge[style={bend left},label=$+$,labelstyle=above](1)(2)
    \Edge[style={bend right},label=$-$,labelstyle=below](1)(2)
    \Edge(2)(3)
\end{tikzpicture}
\caption{Signed Graph}\label{fig:1a}
\end{subfigure}
\begin{subfigure}[c]{.4\linewidth}
\centering
\begin{tikzpicture};
    \SetFancyGraph
    \SetGraphUnit{1.5}
    \Vertex[L=$v_1$,Lpos=left]{1}
    \EA[L=$v_0$,Lpos=left](1){0}
    \NO[L=$v_2$,Lpos=above](0){2}
    \SO[L=$v_{-2}$,Lpos=below](0){-2}
    \EA[L=$v_{-1}$,Lpos=right](0){-1}
    \Edges(1,2,-1,-2,1)
    \Edges(2,0,-2)
\end{tikzpicture}
\caption{Symmetric Graph}\label{fig:1b}
\end{subfigure}
\caption{Example of Signed Graph and Symmetric Graph}\label{fig:1}
\end{figure}
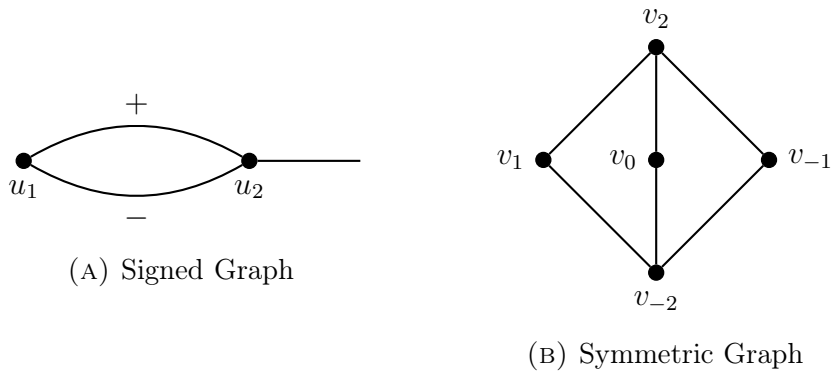
\end{example}

\begin{definition}\label{def:weaklyconn}
A symmetric graph is \emph{simple} if it is simple as a graph. It is \emph{weakly connected} if vertex $v_i$ is connected to $v_{-i}$ for all $i\neq 0$.
\end{definition}

Without explicitly saying, we will assume all symmetric graphs are simple in the remaining paper. That is, it contains neither loops nor multiple edges.

\subsection{Colorings of symmetric graphs}
In \cite{Gooda_Tuttedichromate_21}, Goodall et al. introduced a bivariate $(k,l)$-coloring of a symmetric graph $G$ using colors $\{0,\pm 1,\dots, \pm k\}$ along with an extra set of colors $\{0_1,\dots,0_{l-1}\}$ where we rule that $0_i=-0_i$.

\begin{definition}
Let $k,l\geq 1$ be integers. A \emph{proper $(k,l)$-coloring} of a symmetric graph $G$ is an assignment $f:V\to \{0,\pm 1,\dots, \pm k\}\cup\{0_1,\dots,0_{l-1}\}$ such that
\begin{itemize}
    \item $f(v_0)=0$;
    \item $f(v_i)=-f(v_{-i})$ for any $i\neq 0$;
    \item $f(v_i)\neq f(v_j)$ for any edge $v_iv_j\in E$.
\end{itemize}
\end{definition}

Goodall et al. \cite[Theorem~6.5]{Gooda_Tuttedichromate_21} showed that the number of proper $(k,l)$-colorings of $G$ is also a polynomial in $k$ and $l$, called the \emph{bivariate chromatic polynomial} $\chi_G(k,l)$. When $l=1$, a proper $(k,1)$-coloring is also called a \emph{proper $k$-coloring}, which was first introduced by Zaslavsky in \cite{Zasla_Signedgraphcol_82}.

\begin{remark}
It is an interesting question to find combinatorial interpretations for integer evaluations of the bivariate chromatic polynomial $\chi_G(k,l)$ when $k\leq 0$. The case $l=1$ is well-studied, and later \cref{thm:main} will give an example for $l=2$. This question is still open even for the case $l=0$, which is called \emph{proper non-zero $k$-colorings} by Zaslavsky in \cite{Zasla_Signedgraphcol_82}.
\end{remark}

\begin{example}
We calculate the bivariate chromatic polynomial $\chi_G(k,l)$ of the symmetric graph $G$ in \cref{fig:1b}. We consider two cases based on the coloring of $v_2$.
\begin{itemize}
    \item If $f(v_2)=0_i$ for some $i$, then $v_1$ can be colored any color other than $0_i$. There are $2k+l-1$ choices for $f(v_1)$ and $l-1$ choices for $i$.
    \item If $f(v_2)=j$ for some $r\in\{\pm 1,\dots,\pm k\}$, then $v_1$ can be colored any color other than $\pm j$. There are $2k+l-2$ choices for $f(v_1)$ and $2k$ choices for $j$.
\end{itemize}
Combining the two cases,
\begin{equation}\label{eq:chro}
    \chi_G(k,l)=(2k+l-1)(l-1)+(2k+l-2)2k.
\end{equation}
\end{example}

\subsection{Acyclic orientations of symmetric graphs}

\begin{definition}
A \emph{symmetric acyclic orientations} of symmetric graph $G$ is an acyclic orientation of $G$ such that if $v_i\to v_j$ then $v_{-j}\to v_{-i}$.
\end{definition}

The acyclic orientation of a symmetric graph (or signed graph) was studied by Zaslavsky in \cite{Zasla_Signedgraphcol_82,Zasla_OrientationSigned_91}. Analogous to Richard Stanley's celebrated result \cite{Stanl_Acyclicorientations_73} where the number of acyclic orientations of an ordinary graph is equal to its chromatic polynomial evaluated at $-1$, Greene and Zaslavsky in \cite{Green_InterpretationWhitney_83} proved the following result.

\begin{theorem}[{\cite[Theorem~9.1]{Green_InterpretationWhitney_83}}]\label{thm:zas}
For a symmetric graph $G$ with $2n+1$ vertices, the number of symmetric acyclic orientations of $G$ is equal to $(-1)^n\chi_G(-1,1)$.
\end{theorem}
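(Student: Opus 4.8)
The plan is to derive \cref{thm:zas} from Zaslavsky's classical formula for the number of regions of a real hyperplane arrangement, applied to the signed-graphic arrangement attached to $G$. Write $\Sigma$ for the signed graph on $n$ vertices $u_1,\dots,u_n$ corresponding to $G$, and let $\mathcal A=\mathcal A_G\subseteq\mathbb R^n$ be the arrangement with a hyperplane $x_i=x_j$ for each positive link $u_iu_j$, a hyperplane $x_i=-x_j$ for each negative link $u_iu_j$, and a hyperplane $x_i=0$ for each half edge or negative loop $u_i$. I will prove three statements and combine them: (i) $\chi_G(k,1)=\chi_{\mathcal A}(2k+1)$ as polynomials in $k$, where $\chi_{\mathcal A}$ is the characteristic polynomial; (ii) $(-1)^n\chi_{\mathcal A}(-1)$ is the number $r(\mathcal A)$ of regions of $\mathcal A$ (Zaslavsky); (iii) there is a bijection between regions of $\mathcal A$ and symmetric acyclic orientations of $G$. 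Substituting $k=-1$ in (i) gives $\chi_G(-1,1)=\chi_{\mathcal A}(-1)$, and (ii)--(iii) then give $(-1)^n\chi_G(-1,1)=r(\mathcal A)=\#\{\text{symmetric acyclic orientations}\}$, as wanted.

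For (i), identify a proper $(k,1)$-coloring $f$ of $G$ with the integer point $x=(x_1,\dots,x_n)$ given by $x_i:=f(v_i)$. Because $f$ is forced by $f(v_0)=0$ and $f(v_i)=-f(v_{-i})$, this is a bijection from symmetric colorings onto the grid $S^n$ with $S=\{0,\pm1,\dots,\pm k\}$, and under it the properness conditions $f(v_a)\ne f(v_b)$ over the edges of $G$ translate exactly into $x\notin\bigcup_{H\in\mathcal A}H$. A standard Möbius inclusion--exclusion over the intersection lattice $L(\mathcal A)$ then gives $\#\bigl(S^n\setminus\bigcup H\bigr)=\sum_{w\in L(\mathcal A)}\mu(\hat 0,w)\,\#(S^n\cap w)$; since each flat $w$ is cut out by equations of the shape $x_a=\pm x_b$ or $x_a=0$ and $S$ is symmetric about $0$ and contains $0$, one has $\#(S^n\cap w)=(2k+1)^{\dim w}$, so the right side is exactly $\chi_{\mathcal A}(2k+1)$. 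This holds for all integers $k\ge1$, and both sides are polynomials in $k$ (the left by \cite[Theorem~6.5]{Gooda_Tuttedichromate_21}), so the identity is an identity of polynomials. Part (ii) is Zaslavsky's region-counting theorem, valid for $\mathcal A$ whether or not it is central or essential.

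For (iii), to a point $x$ in a region of $\mathcal A$ associate the orientation of $G$ in which $v_a\to v_b$ whenever $f(v_a)>f(v_b)$, where $f(v_0)=0$ and $f(v_{\pm i})=\pm x_i$. Since $f(v_a)>f(v_b)$ is equivalent to $f(v_{-b})>f(v_{-a})$, this orientation is symmetric, and it is acyclic because a directed cycle would force a strictly decreasing cyclic chain of $f$-values. The orientation is constant on each region, and two distinct regions lie on opposite sides of some hyperplane, hence disagree on the orientation of the corresponding edge; this gives injectivity. For surjectivity, given a symmetric acyclic orientation $O$, let $a(v)$ and $b(v)$ be the lengths of the longest directed paths in $O$ starting at $v$ and ending at $v$, respectively, and set $g(v):=b(v)-a(v)$. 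If $v\to w$ then $a(v)>a(w)$ and $b(w)>b(v)$, so $g$ is strictly increasing along edges; and since $v_i\mapsto v_{-i}$ reverses directed paths, $a(v_{-i})=b(v_i)$ and $b(v_{-i})=a(v_i)$, hence $g(v_{-i})=-g(v_i)$ and in particular $g(v_0)=0$. Thus $x=(g(v_1),\dots,g(v_n))$ avoids every hyperplane of $\mathcal A$ and lies in the region inducing $O$.

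I expect the crux to be surjectivity in (iii): producing, for each symmetric acyclic orientation, a potential that is \emph{odd} under the involution ($g(v_{-i})=-g(v_i)$, $g(v_0)=0$) and still strictly respects every edge. The choice $g(v)=b(v)-a(v)$ is the key device, as it makes oddness automatic rather than something one has to arrange by hand. A secondary point needing care is the flat-count in (i): flats that force some coordinates to $0$, or identify $x_a$ with $-x_b$, must still carry exactly $(2k+1)^{\dim w}$ points of $S^n$, which comes down precisely to $S$ being closed under negation and containing $0$. (One could instead prove (i) by the finite-field method, reducing $\mathcal A$ modulo odd primes $p=2k+1$; that route works too but carries the usual ``$p$ large enough'' hypothesis, so the direct Möbius count is cleaner here.)
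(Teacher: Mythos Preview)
The paper does not give its own proof of \cref{thm:zas}; the result is simply quoted from \cite[Theorem~9.1]{Green_InterpretationWhitney_83}. Your argument is essentially the classical Greene--Zaslavsky proof (characteristic polynomial of the signed-graphic arrangement $=$ chromatic polynomial, then Zaslavsky's region count, then regions $\leftrightarrow$ acyclic orientations), and each step is sound. The M\"obius count in (i) is correct because every flat of $\mathcal A$ is cut out by equations $x_a=\pm x_b$ or $x_a=0$, so intersecting with $S^n$ for $S=\{0,\pm1,\dots,\pm k\}$ indeed gives $(2k+1)^{\dim w}$; and the odd potential $g(v)=b(v)-a(v)$ in (iii) is exactly the right device for producing a symmetric point off the arrangement.

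One small slip in (iii): with your orientation convention ``$v_a\to v_b$ whenever $f(v_a)>f(v_b)$'', the function $g(v)=b(v)-a(v)$ is strictly \emph{increasing} along directed edges of $O$ (as you yourself note), so the point $x=(g(v_1),\dots,g(v_n))$ lands in the region inducing the \emph{reverse} of $O$, not $O$. This does not break surjectivity, since edge reversal is an involution on symmetric acyclic orientations, but for a clean bijection you should take $g(v)=a(v)-b(v)$ instead.
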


\begin{definition}
In a symmetric acyclic orientation $\omega$ of symmetric graph $G$, if $v_i (i\neq 0)$ is a source (sink) vertex of $\omega$, then $v_{-i}$ is a sink (source). If $v_iv_{-i}$ is not an edge, then we call $(v_i,v_{-i})$ a \emph{source-sink pair}. Reversing the direction of all edges adjacent to the source-sink pair $(v_i,v_{-i})$, we get another symmetric acyclic orientation $\omega'$ of $G$, and we say $\omega$ and $\omega'$ differ by a \emph{flip}, denoted $\omega\sim\omega'$. The transitive closure of the flip operation generates an equivalence relation on the set of all symmetric acyclic orientation $\text{Acyc}(G)$ denoted by $\sim$.
\end{definition}

The following is our main theorem.

\begin{theorem}\label{thm:main}
For a weakly connected symmetric graph $G$ with $2n+1$ vertices, we have
\[|\text{Acyc}(G)/\sim|=(-1)^n\chi_G(-1,2).\]
\end{theorem}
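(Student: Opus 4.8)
The plan is to follow the ``toric'' route outlined in the introduction and reduce the identity to a count of chambers of the graphical toric hyperplane arrangement associated to $G$. First I would recall Stanley-type reciprocity in the toric setting: for the bivariate chromatic polynomial, the key is that a proper $(k,l)$-coloring is the same as a lattice point in the complement of a certain arrangement of (affine) hyperplanes in a torus $(\mathbb{R}/\mathbb{Z})^{n}$, where the hyperplanes record the forbidden coincidences $f(v_i)=f(v_j)$ for each edge. The extra parameter $l$ enters because of the symmetric colors $0_1,\dots,0_{l-1}$, which I expect to correspond to an extra ``coordinate hyperplane'' $x_i=0$ being thrown into the arrangement; evaluating at $l=2$ should mean including exactly one such hyperplane per coordinate, i.e. passing to the toric arrangement whose hyperplanes are $x_i\pm x_j=0$ (for links), $x_i=0$ (for half edges), and all $x_i=0$ from the $l=2$ term. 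The point of \cref{sec:toric} and \cref{sec:graphical} will be to make this dictionary precise and to show via the finite-field (Ehrenborg--Readdy--Slone) method that $|\chi_G(-1,2)|$ counts the chambers of this toric arrangement.

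Second, I would establish the combinatorial half of the bijection: chambers of the graphical toric arrangement of $G$ are in bijection with $\mathrm{Acyc}(G)/\!\sim$. This is the analogue of the Develin--Macauley--Reiner result for toric partial orders. A chamber is a connected component of the torus minus the arrangement; walking along a generic segment in a chamber and recording, for each edge, which side of the corresponding hyperplane we are on, gives a symmetric orientation of $G$, and the toric (rather than affine) nature of the arrangement forces the orientation to be acyclic ``up to winding,'' which is exactly what the flip equivalence $\sim$ quotients out. I would need to check two things carefully: that every chamber yields a well-defined $\sim$-class (independent of the chosen segment and base point), and that crossing a single wall of the arrangement corresponds precisely to a single flip move at a source-sink pair. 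The symmetry involution $\iota$ must be respected throughout, so I would work $\iota$-equivariantly: the arrangement is $\iota$-stable (since $x_i+x_j=0$ is the $\iota$-image of $x_{-i}+x_{-j}=0$ after relabeling $x_{-i}=-x_i$), and the orientations read off are automatically symmetric.

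Third, combining the two bijections gives $|\mathrm{Acyc}(G)/\!\sim| = \#\{\text{chambers}\} = |\chi_G(-1,2)|$, and the sign $(-1)^n$ comes from the dimension $n$ of the torus (the number of vertex pairs), exactly as in Zaslavsky's \cref{thm:zas} where the analogous arrangement without the coordinate hyperplanes gives $(-1)^n\chi_G(-1,1)$. To pin down the sign rather than just the absolute value, I would track the leading behavior: $\chi_G(k,2)$ is a polynomial in $k$ of degree $n$ with leading coefficient $2^n$ (each of the $n$ free coordinates ranges over $2k+1$ values to leading order), so $(-1)^n\chi_G(-1,2)>0$, matching the cardinality on the left. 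Finally, the weak-connectedness hypothesis is used to guarantee that the coordinate hyperplanes $x_i=0$ are genuinely cut out (each $v_i$ is joined to $v_{-i}$ through the graph so the $l$-dependence is ``full rank''); without it the arrangement could degenerate and the clean evaluation at $l=2$ would fail.

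\textbf{Main obstacle.}
The hardest part will be the second step: proving that wall-crossings in the toric arrangement match flip moves at source-sink pairs \emph{bijectively}, including showing that $\sim$-equivalence is \emph{exactly} generated by these crossings and not something coarser. In the ordinary (non-signed) case this is the content of the Develin--Macauley--Reiner correspondence, but the presence of the involution $\iota$ and of the extra coordinate hyperplanes $x_i=0$ (the $l=2$ contribution) means a source at $v_i$ is simultaneously a sink at $v_{-i}$, so a single flip is an $\iota$-symmetric move touching two vertices at once; verifying that this is precisely one wall-crossing of the $\iota$-quotient arrangement — and that the torus topology contributes no extra identifications beyond $\sim$ — is the delicate point. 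I expect the alternative proofs (deletion-contraction in \cref{sec:delcont} and the direct induction in \cref{sec:directly}) to be included precisely because this geometric step, while conceptually clean, is technically the most demanding.
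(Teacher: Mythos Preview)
Your overall route is exactly the paper's first proof: define the graphical toric arrangement $\cB_\text{tor}(G)$, identify its chambers with $\text{Acyc}(G)/{\sim}$, and compute the chamber count as $(-1)^n\chi_G(-1,2)$ via the Ehrenborg--Readdy--Slone finite-field method together with the essentiality of the arrangement (which is where weak connectedness enters). So the strategy is sound and matches the paper.

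Two points in your proposal are off, however, and would cause trouble in the execution. First, the parameter $l=2$ does \emph{not} correspond to throwing in extra coordinate hyperplanes $x_i=0$; the hyperplanes of $\cB_\text{tor}(G)$ are determined entirely by the edges of $G$. The reason $l=2$ appears is that on the circle $\R/\Z$ the involution $x\mapsto -x$ has \emph{two} fixed points, $0$ and $\tfrac12$, and the lattice value $x_i=\tfrac12$ plays the role of the extra symmetric color $0_1$ when you count lattice points in $\big(\tfrac1q\Z\big)^n/\Z^n$ off the arrangement for even $q=2k+2$; this is the content of the paper's \cref{prop:charandchar}. Second, and more seriously, flips do \emph{not} correspond to crossing a wall of the arrangement --- crossing a wall changes the chamber. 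A flip at $(v_i,v_{-i})$ corresponds instead to the point leaving the fundamental domain $(-\tfrac12,\tfrac12]^n$ through the face $x_i=\pm\tfrac12$ and re-entering on the opposite side; this is a move \emph{within a single toric chamber} that changes only the affine lift. The paper's bijection (\cref{thm:chamandacyc}) is set up exactly this way: fix the fundamental domain, read off an orientation from the coordinate order, and observe that two points of the same toric chamber differ by a sequence of such boundary crossings, hence by a sequence of flips. If you try to match flips to arrangement-wall crossings you will not be able to prove injectivity.
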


\begin{example}
We consider the symmetric graph $G$ in \cref{fig:1b}. On the one hand, its bivariate chromatic polynomial is computed in \cref{eq:chro} and we can evaluate it
\[\chi_G(-1,2)=3.\]
On the other hand, $G$ has exactly $3$ equivalence classes of symmetric acyclic orientations as shown in \cref{fig:2}.
\begin{figure}[ht!]
\centering
\begin{subfigure}[c]{.3\linewidth}
\centering
\begin{tikzpicture}[scale=0.8]
    \SetFancyGraph
    \SetGraphUnit{1.5}
    \begin{scope}
    \tikzset{EdgeStyle/.append style=->--}
    \Vertex[L=$v_1$,Lpos=left]{1}
    \EA[L=$v_0$,Lpos=left](1){0}
    \NO[L=$v_2$,Lpos=above](0){2}
    \SO[L=$v_{-2}$,Lpos=below](0){-2}
    \EA[L=$v_{-1}$,Lpos=right](0){-1}
    \Edges(2,1,-2)
    \Edges(2,0,-2)
    \Edges(2,-1,-2)

    \SO[L=$v_2$,Lpos=above,unit=4](-2){2a}
    \SO[L=$v_0$,Lpos=left](2a){0a}
    \WE[L=$v_1$,Lpos=left](0a){1a}
    \SO[L=$v_{-2}$,Lpos=below](0a){-2a}
    \EA[L=$v_{-1}$,Lpos=right](0a){-1a}
    \Edges(-2a,1a,2a)
    \Edges(-2a,0a,2a)
    \Edges(-2a,-1a,2a)
    \end{scope}
    
    \SO[empty=true,unit=1](-2){X}
    \NO[empty=true,unit=1](2a){Y}
    \Edge[style=<->,label=flip $v_2/v_{-2}$](X)(Y)
\end{tikzpicture}
\caption{Class 1}
\end{subfigure}
\begin{subfigure}[c]{.3\linewidth}
\centering
\begin{tikzpicture}[scale=0.8]
    \SetFancyGraph
    \SetGraphUnit{1.5}
    \begin{scope}
    \tikzset{EdgeStyle/.append style=->--}
    \Vertex[L=$v_1$,Lpos=left]{1}
    \EA[L=$v_0$,Lpos=left](1){0}
    \NO[L=$v_2$,Lpos=above](0){2}
    \SO[L=$v_{-2}$,Lpos=below](0){-2}
    \EA[L=$v_{-1}$,Lpos=right](0){-1}
    \Edges(1,2,-1)
    \Edges(2,0,-2)
    \Edges(1,-2,-1)

    \SO[L=$v_2$,Lpos=above,unit=4](-2){2a}
    \SO[L=$v_0$,Lpos=left](2a){0a}
    \WE[L=$v_1$,Lpos=left](0a){1a}
    \SO[L=$v_{-2}$,Lpos=below](0a){-2a}
    \EA[L=$v_{-1}$,Lpos=right](0a){-1a}
    \Edges(-1a,2a,1a)
    \Edges(2a,0a,-2a)
    \Edges(-1a,-2a,1a)
    \end{scope}
    
    \SO[empty=true,unit=1](-2){X}
    \NO[empty=true,unit=1](2a){Y}
    \Edge[style=<->,label=flip $v_1/v_{-1}$](X)(Y)
\end{tikzpicture}
\caption{Class 2}
\end{subfigure}
\begin{subfigure}[c]{.3\linewidth}
\centering
\begin{tikzpicture}[scale=0.8]
    \SetFancyGraph
    \SetGraphUnit{1.5}
    \begin{scope}
    \tikzset{EdgeStyle/.append style=->--}
    \Vertex[L=$v_1$,Lpos=left]{1}
    \EA[L=$v_0$,Lpos=left](1){0}
    \NO[L=$v_2$,Lpos=above](0){2}
    \SO[L=$v_{-2}$,Lpos=below](0){-2}
    \EA[L=$v_{-1}$,Lpos=right](0){-1}
    \Edges(1,2,-1)
    \Edges(-2,0,2)
    \Edges(1,-2,-1)

    \SO[L=$v_2$,Lpos=above,unit=4](-2){2a}
    \SO[L=$v_0$,Lpos=left](2a){0a}
    \WE[L=$v_1$,Lpos=left](0a){1a}
    \SO[L=$v_{-2}$,Lpos=below](0a){-2a}
    \EA[L=$v_{-1}$,Lpos=right](0a){-1a}
    \Edges(-1a,2a,1a)
    \Edges(-2a,0a,2a)
    \Edges(-1a,-2a,1a)
    \end{scope}
    
    \SO[empty=true,unit=1](-2){X}
    \NO[empty=true,unit=1](2a){Y}
    \Edge[style=<->,label=flip $v_1/v_{-1}$](X)(Y)
\end{tikzpicture}
\caption{Class 3}
\end{subfigure}
\caption{Equivalence classes of symmetric acyclic orientations of $G$ in \cref{fig:1b}}\label{fig:2}
\end{figure}
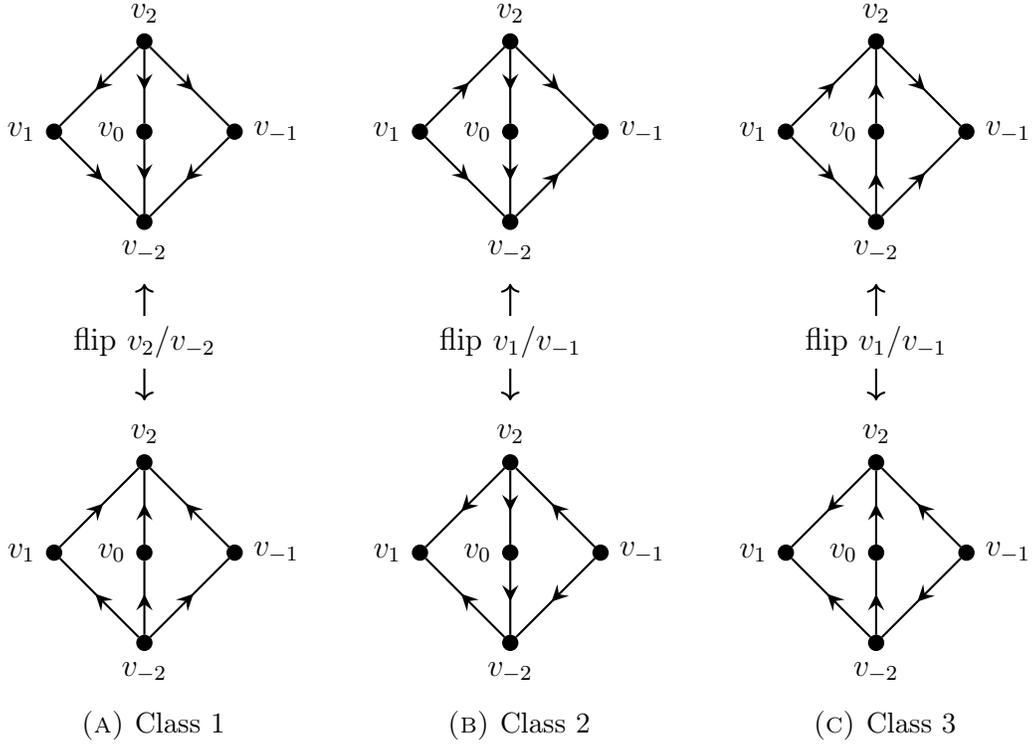
\end{example}

We will present three proofs for \cref{thm:main} in the following sections. The first two proofs relate both numbers to the number of chambers in a graphical toric hyperplane arrangement corresponding to $G$, while the last proof uses a direct induction.
\section{Preliminaries: Toric Hyperplane Arrangements}\label{sec:toric}

\begin{definition}
Let $\cH=\{H_1,\dots,H_m\}$ be the union of (rational) hyperplanes in $\R^n$ where in $H_i:\vec{h}_i\cdot \vec{x}=b_i$, all coordinates $\vec{h}_i$ and $b_i$ are rational. The \emph{toric hyperplane arrangement} $\cH_\text{tor}$ is the image of $\cH$ under the projection map $\pi:\R^n\to\R^n/\Z^n$. Its \emph{affine hyperplane arrangement} is the preimage $\cH_\text{aff}=\pi^{-1}(\cH_\text{tor})$.
\end{definition}

In simple terms, toric hyperplane arrangements are projections of hyperplane arrangements onto a tori. It have been studied extensively by many previous research, for example \cite{Novik_SyzygiesOriented_00,Ehren_AffineToric_09,Moci_TuttePolynomial_12,Devel_Toricpartial_16}. In this section, we will state some basic facts about toric hyperplane arrangements without proof. Most of the results can be found in \cite{Ehren_AffineToric_09}.

\begin{definition}
Let $V$ be a $k$-dimensional (rational) affine subspace of $\R^n$, where $V=\{\vec{x}:A\vec{x}=\vec{b}\}$ for some matrix $A$ and vector $\vec{b}$ with rational entries. Then the image $\pi(V)$ is called a \emph{$k$-dimensional toric subspace} of $\R^n/\Z^n$. An $(n-1)$-dimensional toric subspace is also called a \emph{toric hyperplane}.
\end{definition}

\begin{lemma}[{\cite[Lemma~3.1]{Ehren_AffineToric_09}}]
A $k$-dimensional toric subspace is homeomorphic to the $k$-dimensional torus $\R^k/\Z^k$. Moreover, the intersection of toric subspaces $V\cap W$ is a disjoint union of toric subspaces $V\cap W=\cup_{i=1}^r U_i$ for some integer $r$.
\end{lemma}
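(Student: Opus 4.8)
The plan is to prove the two assertions separately; the first is a soft structural argument, and the real difficulty is a finiteness statement hidden in the second. For the homeomorphism type, let $\pi(V)$ be the image of a nonempty rational affine subspace $V=\{\vec x:A\vec x=\vec b\}$, and write $V=\vec v_0+L$ with $\vec v_0\in V\cap\mathbb{Q}^n$ (a consistent rational linear system has a rational solution, by Gaussian elimination) and $L=\{\vec x:A\vec x=\vec 0\}$ the rational linear part, $\dim L=k$. First I would observe that $\Lambda:=L\cap\Z^n$ is a direct summand of $\Z^n$: the quotient $\Z^n/\Lambda$ is torsion free, since $m\vec x\in L$ forces $\vec x\in L$, and a finitely generated torsion free abelian group is free. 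Hence there is a $\Z$-basis $\vec b_1,\dots,\vec b_n$ of $\Z^n$ with $\vec b_1,\dots,\vec b_k$ a $\Z$-basis of $\Lambda$, and in particular an $\R$-basis of $L$ (using $L=\operatorname{span}_\R(L\cap\Z^n)$, which holds because $L$ is rational). The change-of-basis matrix lies in $\mathrm{GL}_n(\Z)$, so it descends to a homeomorphism of $\R^n/\Z^n$ carrying $\pi(\operatorname{span}_\R(\vec e_1,\dots,\vec e_k))$ onto $\pi(L)$; post-composing with translation by $\pi(\vec v_0)$, itself a homeomorphism, carries $\pi(\operatorname{span}_\R(\vec e_1,\dots,\vec e_k))\cong\R^k/\Z^k$ onto $\pi(V)$. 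As a byproduct, $\pi(L)$ is the continuous image of the compact torus $\R^k/\Z^k$, hence compact and closed in $\R^n/\Z^n$ — a fact I use below.

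For the intersection statement, write the two toric subspaces as $\pi(V)$ and $\pi(W)$ with $V=\vec v_0+L_V$ and $W=\vec w_0+L_W$ rational affine, and set $L:=L_V\cap L_W$. Taking $\pi$-preimages, $\pi^{-1}(\pi(V)\cap\pi(W))=(V+\Z^n)\cap(W+\Z^n)=\bigcup_{\vec z\in\Z^n}\bigl(V\cap(W+\vec z)\bigr)+\Z^n$, and every nonempty $V\cap(W+\vec z)$ is a rational affine subspace with linear part exactly $L$. Applying $\pi$ yields $\pi(V)\cap\pi(W)=\bigcup_{\vec z}\pi\bigl(V\cap(W+\vec z)\bigr)$, a union of translates of the closed subtorus $\pi(L)$. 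Since any two cosets of a subgroup are either equal or disjoint, the distinct sets among the $\pi(V\cap(W+\vec z))$ are automatically pairwise disjoint, and each is a toric subspace; so all that remains is to show that only finitely many distinct ones occur.

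This finiteness is the crux, and it is where rationality is essential. I would pass to the quotient $q:\R^n/\Z^n\to Q:=\R^n/(L+\Z^n)$ by the subtorus $\pi(L)$ (again a torus, by the first part applied to $L$). In $Q$ the images $q(\pi(V))$ and $q(\pi(W))$ are toric subspaces with direction subgroups $q(L_V)$ and $q(L_W)$, and the key claim is that $q(L_V)\cap q(L_W)$ is finite: modulo $L$ the subspaces $L_V$ and $L_W$ meet only in $(L_V\cap L_W)/L=0$, so $L_V/L$ and $L_W/L$ are complementary rational subspaces of $(L_V+L_W)/L$, and since the projection of a full lattice in a rational subspace onto a rational direct summand is again a full lattice, one confines $q(L_V)\cap q(L_W)$ inside the image of $\tfrac1N\Z^n$ in $Q$ for a suitable integer $N$, which is finite. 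Consequently $q(\pi(V))\cap q(\pi(W))$ — a coset of $q(L_V)\cap q(L_W)$ when nonempty — is a finite set $\{a_1,\dots,a_r\}$ of rational points, and since $\pi(V)\cap\pi(W)$ is a union of $\pi(L)$-cosets each mapping to some $a_i$ under $q$, we get $\pi(V)\cap\pi(W)=\bigsqcup_{i=1}^r q^{-1}(a_i)$ with each fiber $q^{-1}(a_i)=\pi(\vec c_i)+\pi(L)=\pi(\vec c_i+L)$ a toric subspace (taking $\vec c_i\in\mathbb{Q}^n$ with $q(\vec c_i)=a_i$). The main obstacle is precisely this finiteness: the topological bookkeeping is routine, but without rationality a subtorus need not map to a closed set and the intersection could genuinely be an infinite union of cosets, so one must use throughout that $L_V$, $L_W$ and hence $L_V\cap L_W$, $L_V+L_W$, and the splitting of $(L_V+L_W)/L$ are all defined over $\mathbb{Q}$.
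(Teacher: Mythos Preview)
The paper does not prove this lemma; it is quoted from \cite{Ehren_AffineToric_09} as one of several ``basic facts about toric hyperplane arrangements'' stated without proof. So there is no in-paper argument to compare against.

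Your proof is correct. The first half is the standard lattice argument: rationality of $L$ makes $L\cap\Z^n$ a rank-$k$ saturated sublattice, hence a direct summand, and a $\mathrm{GL}_n(\Z)$ change of coordinates straightens $\pi(L)$ to a coordinate subtorus. For the second half, your reduction to the quotient torus $Q=(\R^n/\Z^n)/\pi(L)$ with $L=L_V\cap L_W$ is the right move, and the finiteness step is sound: in $Q$ the images $\bar L_V=L_V/L$ and $\bar L_W=L_W/L$ are transverse rational subspaces, the projection $p_V:\bar L_V\oplus\bar L_W\to\bar L_V$ is defined over $\Q$, so $p_V\bigl((\bar L_V+\bar L_W)\cap\Z^{n-k}\bigr)$ sits in $\tfrac1N\Z^{n-k}$ for some $N$, and its image in the quotient torus is finite. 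Two small points of bookkeeping you glossed over but which hold: (i) the equality $q(\pi(V)\cap\pi(W))=q(\pi(V))\cap q(\pi(W))$ really does hold here, not just the trivial inclusion, precisely because $\pi(L)\subseteq\pi(L_V)\cap\pi(L_W)$, so each $q$-fiber meeting $\pi(V)$ lies entirely inside $\pi(V)$ and likewise for $\pi(W)$; (ii) each component $\pi(V\cap(W+\vec z))$ is genuinely a toric subspace in the sense of the definition because $V\cap(W+\vec z)$ is a rational affine space and hence contains a rational point. A minor notational slip: you write $q(L_V)$ where you mean $q(\pi(L_V))$, since $q$ lives on the torus, not on $\R^n$.
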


\begin{definition}
For a toric hyperplane arrangement $\cH_\text{tor}=\{H_1,\dots,H_m\}$, define the \emph{intersection lattice} $\cL(\cH)$ as the set of all connected components (toric subspaces) of all possible intersections of toric hyperplanes $H_{i_1}\cap \cdots \cap H_{i_r}$, ordered by inverse inclusion. It has a unique minimal element $\Hat{0}$ corresponding to the intersection of empty set, which is the entire space $\R^n/\Z^n$ itself. The \emph{toric characteristic polynomial} $\chi_\cH(q)$ is the characteristic polynomial of poset $\cL(\cH)$
\[\chi_\cH(q)=\sum_{x\in\cL(\cH)}\mu(\Hat{0},x)\cdot q^{\dim(x)}.\]
\end{definition}

\begin{example}
\cref{fig:3} shows an example of a toric hyperplane arrangement with hyperplanes $x_1=2x_2$, $2x_1=x_2$ and $x_1=x_2$. The corresponding intersection lattice $\cL(\cH)$ has toric characteristic polyomial $\chi_\cH(q)=q^2-3q+4$.
\begin{figure}[ht!]
\centering
\begin{subfigure}[c]{.4\linewidth}
\centering
\begin{tikzpicture}[scale=1.5]
    \draw[opacity=0.5,gray](-0.2,0) -- (2.2,0);
    \draw[opacity=0.5,gray](-0.2,2) -- (2.2,2);
    \draw[opacity=0.5,gray](0,-0.2) -- (0,2.2);
    \draw[opacity=0.5,gray](2,-0.2) -- (2,2.2);
    
    \draw[thick] (-0.2,-0.1) -- (2.2,1.1);
    \draw[thick] (-0.1,-0.2) -- (1.1,2.2);
    \draw[thick] (-0.2,0.9) -- (2.2,2.1);
    \draw[thick] (0.9,-0.2) -- (2.1,2.2);
    \draw[thick] (-0.1,-0.1) -- (2.1,2.1);
\end{tikzpicture}
\caption{Toric hyperplane arrangement}
\end{subfigure}
\begin{subfigure}[c]{.4\linewidth}
\centering
\begin{tikzpicture};
    \SetFancyGraph
    \SetGraphUnit{1.2}
    \Vertex[L=$\Hat{0}$,Lpos=below]{0}
    \NO[NoLabel](0){1}
    \NOEA[NoLabel](0){2}
    \NOWE[NoLabel](0){3}
    \NO[NoLabel](1){4}
    \NO[NoLabel](2){5}
    \NO[NoLabel](3){6}
    \Edges(2,0,3)
    \Edges(2,4,3)
    \Edges(2,6,3)
    \Edges(2,5,3)
    \Edges(0,1,4)
\end{tikzpicture}
\caption{Intersection Lattice $\cL(\cH)$}
\end{subfigure}
\caption{Example of a toric hyperplane arrangement of hyperplanes defined by $x_1=2x_2$, $2x_1=x_2$ and $x_1=x_2$}\label{fig:3}
\end{figure}
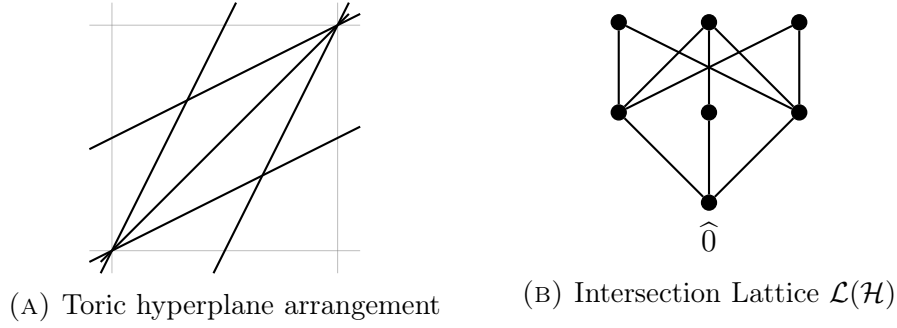
\end{example}

For a toric hyperplane arrangement $\cH_\text{tor}$, the connected components of $\R^n/\Z^n-\cH_\text{tor}$ are called \emph{toric chambers} of $\cH_\text{tor}$. The set of toric chambers is denoted as $\text{Cham}(\cH_\text{tor})$. Similarly, for its affine hyperplane arrangement $\cH_\text{aff}$, the connected components of $\R^n-\cH_\text{aff}$ are called \emph{affine chambers} of $\cH_\text{aff}$. The set of affine chambers is denoted as $\text{Cham}(\cH_\text{aff})$.

Recall that a hyperplane arrangement is \emph{essential} if the normal vectors of the hyperplanes span the entire space $\R^n$. When the toric arrangement $\cH_\text{tor}$ is essential, we can determine the number of toric chambers of $\cH_\text{tor}$ using the toric characteristic polynomial.

\begin{theorem}[{\cite[Theorem~3.6]{Ehren_AffineToric_09}}]\label{thm:chamandchar}
If $\cH$ is essential, then
\[|\text{Cham}(\cH_\text{tor})|=(-1)^n\chi_{\cH}(0).\]
\end{theorem}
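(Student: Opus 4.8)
This is the toric counterpart of Zaslavsky's theorem relating chambers to the characteristic polynomial of an (affine) arrangement, and I would prove it by computing the compactly supported Euler characteristic $\chi_c$ of $T^n=\R^n/\Z^n$ in two ways and inducting on $n$. The base case $n=0$ is immediate, and for $n\geq 1$ one uses $\chi_c(T^n)=\chi(T^n)=0$. The arrangement $\cH_\text{tor}$ cuts $T^n$ into finitely many locally closed faces; each face $F$ lies in the relative interior of a unique flat $X\in\cL(\cH)$ — the component through $F$ of the intersection of the hyperplanes containing $F$ — and inside the torus $X$ the face $F$ is a chamber of the induced arrangement $\cH^X$ whose hyperplanes are the connected components of $X\cap H_i$ for $H_i\not\supseteq X$. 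Conversely, as $X$ runs over $\cL(\cH)$ these chambers list every face exactly once, so additivity of $\chi_c$ gives
\[\chi_c(T^n)=\sum_{X\in\cL(\cH)}\ \sum_{F\text{ a chamber of }\cH^X}\chi_c(F).\]

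The crux is the following dichotomy, where essentiality enters. If a rational toric arrangement on a $k$-torus is essential, then each chamber $F$ is homeomorphic to an affine chamber, hence convex and contractible, so $\chi_c(F)=(-1)^k$; if it is inessential, then $\chi_c(F)=0$ for every chamber. For the essential case, lift $F$ to a connected component $\widetilde F$ of its preimage under $\pi:\R^k\to T^k$. Were two points $x,x+v$ of $\widetilde F$ to differ by a nonzero lattice vector $v$, then since the normals span $\R^k$ some primitive integer normal $h$ has $h\cdot v\neq 0$; the $\Z^k$-translates of the hyperplane $h\cdot y=b$ are exactly $h\cdot y=b+m$ for all $m\in\Z$ (as $h$ is primitive), and moving along the segment from $x$ to $x+v$ changes $h\cdot y$ by the nonzero integer $h\cdot v$, so this segment — which lies in the convex set $\widetilde F$, disjoint from the affine arrangement — must cross one of them, a contradiction. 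Hence $\pi|_{\widetilde F}$ is injective, and an open-and-closed argument shows it maps onto $F$, so $F\cong\widetilde F$. In the inessential case the normals span a rational proper subspace, so $T^k$ is partitioned by parallel positive-dimensional subtori, each contained in or disjoint from every hyperplane; each chamber is a union of such subtori and so fibers over an open set with positive-dimensional torus fibers, forcing $\chi_c(F)=0$.

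Now let $\cL_\text{ess}$ be the set of flats $X$ with $\cH^X$ essential on $X$. By hypothesis $\Hat{0}=T^n\in\cL_\text{ess}$, and $\cL_\text{ess}$ contains the whole interval $[\Hat{0},Y]$ under every $0$-dimensional flat $Y$, since the hyperplanes through $Y$ already cut any flat $X\leq Y$ down to the point $Y$ and hence have spanning normals. By the dichotomy, the double sum collapses to $\chi_c(T^n)=\sum_{X\in\cL_\text{ess}}(-1)^{\dim X}\,|\text{Cham}(\cH^X)|$. Every $\cH^X$ with $X\neq\Hat{0}$ is a rational toric arrangement on a torus of dimension $<n$, so induction gives $|\text{Cham}(\cH^X)|=(-1)^{\dim X}\chi_{\cH^X}(0)$ for such $X$ in $\cL_\text{ess}$, where $\chi_{\cH^X}(q)=\sum_{Y\geq X}\mu(X,Y)q^{\dim Y}$; since $\chi_c(T^n)=0$, isolating the $\Hat{0}$ term yields $(-1)^n|\text{Cham}(\cH_\text{tor})|=-\sum_{X\in\cL_\text{ess},\,X\neq\Hat{0}}\chi_{\cH^X}(0)$. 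Comparing coefficients of each $0$-dimensional flat $Y$, the identity this must match, namely $\chi_\cH(0)=-\sum_{X\in\cL_\text{ess},\,X\neq\Hat{0}}\chi_{\cH^X}(0)$, reduces to $\sum_{X\in[\Hat{0},Y]}\mu(X,Y)=0$, which holds because $[\Hat{0},Y]$ has more than one element.

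A deletion–restriction induction on the number of hyperplanes is also possible, but is delicate here, since adding a toric hyperplane need not separate any chamber, so the chamber recursion is not the naive one from the affine case. I expect the Euler characteristic route above to be cleaner, and I regard the dichotomy of the second paragraph — together with the identification $\cL_\text{ess}\cap[\Hat{0},Y]=[\Hat{0},Y]$ that powers the telescoping — as the main obstacle; the rest is standard bookkeeping, using freely that each $\cH^X$ is again a rational toric arrangement and that lower intervals of $\cL(\cH)$ are geometric lattices.
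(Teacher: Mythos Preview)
The paper does not give its own proof of this theorem: it is stated in the preliminaries section with the explicit caveat that the results there are quoted without proof from \cite{Ehren_AffineToric_09}. So there is nothing in the paper to compare your argument against directly.

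That said, your Euler-characteristic argument is correct and is essentially the approach of the cited source. The key steps all check out: the face decomposition indexed by $\cL(\cH)$, the dichotomy $\chi_c(F)=(-1)^{\dim X}$ versus $\chi_c(F)=0$ according to whether $\cH^X$ is essential (your lifting argument for the essential case is fine, since any nonempty open convex subset of $\R^k$ has $\chi_c=(-1)^k$ regardless of boundedness), and the identification of $\cL(\cH^X)$ with the upper set $\{Y\geq X\}$ in $\cL(\cH)$ so that the Möbius identity $\sum_{X\leq Y}\mu(X,Y)=0$ finishes the telescoping. Your observation that $\cL_\text{ess}$ coincides with the union of the intervals $[\Hat{0},Y]$ over $0$-dimensional $Y$ is exactly what makes the coefficient comparison go through. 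The only remark I would add is that the claim ``lower intervals of $\cL(\cH)$ are geometric lattices'' in your final paragraph is not actually used anywhere in the argument and could be dropped.

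Your closing comment that a deletion--restriction proof is also available is consistent with the paper's later remark (in the proof of \cref{thm:delres}) that deletion--restriction for toric chambers is ``analogous'' to the proof of this theorem; but as you note, the toric deletion--restriction recursion is genuinely more delicate than in the affine case, and the Euler-characteristic route you chose is the cleaner one.
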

\section{A first proof: graphical toric hyperplane arrangements}\label{sec:graphical}

In order to prove the main theorem, we will show that both sides of \cref{thm:main} are equal to the number of chambers in a graphical toric hyperplane arrangement associated to $G$, which is defined below.

\begin{definition}
Given a symmetric graph $G=(V,E)$ with $2n+1$ vertices, define its \emph{graphical toric hyperplane arrangement} $\cB_\text{tor}(G)$ by the union of the following hyperplanes in $\R^n/\Z^n$.
\begin{multline*}
    \cB_\text{tor}(G)=\bigcup_{\substack{0<i<j\leq n\\(v_i,v_j)\in E}}\{x_i-x_j=0\}\cup\bigcup_{\substack{0<i<j\leq n\\(v_i,v_{-j})\in E}}\{x_i+x_j=0\}\cup\bigcup_{\substack{0<i\leq n\\(v_0,v_i)\in E}}\{x_i=0\}\\
    \cup\bigcup_{\substack{0<i\leq n\\(v_i,v_{-i})\in E}}\{x_i=0\}\cup\bigcup_{\substack{0<i\leq n\\(v_i,v_{-i})\in E}}\{2x_i=1\}.
\end{multline*}
Define $\cB_\text{aff}(G):=\pi^{-1}(\cB_\text{tor}(G))$ as the corresponding \emph{graphical affine hyperplane arrangement}.
\end{definition}

\begin{example}
For the symmetric graph $G$ in \cref{fig:1b}, \cref{fig:4} shows the affine hyperplane arrangement $\cB_\text{aff}(G)$ and toric hyperplane arrangement $\cB_\text{tor}(G)$ corresponding to $G$, with their chambers colored in different colors.
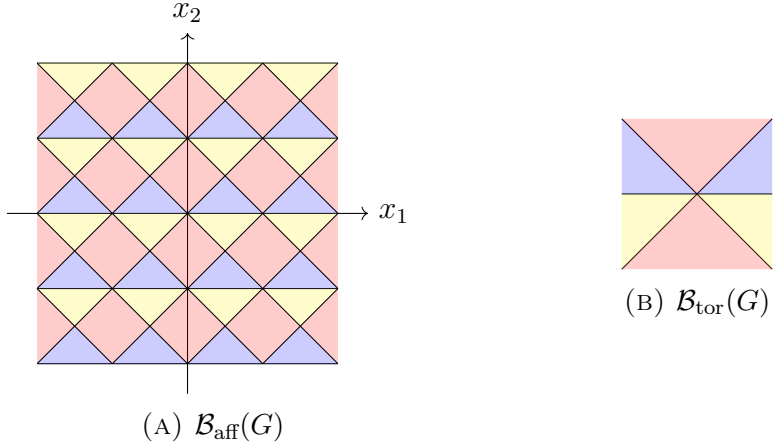
\begin{figure}[ht!]
\centering
\begin{subfigure}[c]{.4\linewidth}
\centering
\begin{tikzpicture}
    \draw[->] (-2.4,0)--(2.4,0) node[right]{$x_1$};
    \draw[->] (0,-2.4)--(0,2.4) node[above]{$x_2$};
    \draw (-2,1)--(2,1);
    \draw (-2,-1)--(2,-1);
    \draw (-2,2)--(2,2);
    \draw (-2,-2)--(2,-2);
    \foreach \x in {-2,-1,0,1}
    \foreach \y in {-2,-1,0,1}
    {\begin{scope}[shift={(\x,\y)}]
    \fill[red,opacity=0.2]  (0,0) -- (0.5,0.5) -- (0,1) -- cycle;
    \fill[red,opacity=0.2]  (1,0) -- (0.5,0.5) -- (1,1) -- cycle;
    \fill[yellow,opacity=0.2]  (0,1) -- (0.5,0.5) -- (1,1) -- cycle;
    \fill[blue,opacity=0.2]  (0,0) -- (0.5,0.5) -- (1,0) -- cycle;
    \draw (0,0)--(1,1);
    \draw (1,0)--(0,1);
    \end{scope}}
\end{tikzpicture}
\caption{$\cB_\text{aff}(G)$}
\end{subfigure}
\begin{subfigure}[c]{.4\linewidth}
\centering
\begin{tikzpicture}
    \draw (-1,-1)--(1,1);
    \draw (-1,1)--(1,-1);
    \draw (-1,0)--(1,0);
    \fill[red,opacity=0.2]  (0,0) -- (1,1) -- (-1,1) -- cycle;
    \fill[red,opacity=0.2]  (0,0) -- (-1,-1) -- (1,-1) -- cycle;
    \fill[yellow,opacity=0.2]  (0,0) -- (1,0) -- (1,-1) -- cycle;
    \fill[yellow,opacity=0.2]  (0,0) -- (-1,0) -- (-1,-1) -- cycle;
    \fill[blue,opacity=0.2]  (0,0) -- (1,0) -- (1,1) -- cycle;
    \fill[blue,opacity=0.2]  (0,0) -- (-1,0) -- (-1,1) -- cycle;
\end{tikzpicture}
\caption{$\cB_\text{tor}(G)$}
\end{subfigure}
\caption{Example of $\cB_\text{aff}(G)$ and $\cB_\text{tor}(G)$. The range of $\cB_\text{aff}(G)$ is $[-2,2]^2$, and the range of $\cB_\text{tor}(G)$ is $(-0.5,0.5]^2$. The coloring of different chambers is based on the projection map $\pi$.}
\label{fig:4}
\end{figure}
\end{example}

\begin{lemma}
The toric arrangement $\cB_\text{tor}(G)$ is essential if and only if $G$ is weakly connected.
\end{lemma}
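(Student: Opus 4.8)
The plan is to reduce the claim to a fact about the span of the normal vectors of the hyperplanes making up $\cB_\text{tor}(G)$, since essentiality depends only on this span (and a toric hyperplane, together with every affine hyperplane that lifts it, has one well-defined normal direction). The convenient bookkeeping device is to introduce the formal coordinates $x_0 := 0$ and $x_{-i} := -x_i$ for $1 \le i \le n$: then each hyperplane of $\cB_\text{tor}(G)$ has the same normal direction as ``$x_a = x_b$'' for some edge $v_a v_b$ of $G$ (with $a, b \in \{-n, \dots, n\}$), namely the vector $\nu_{ab} := \operatorname{sgn}(a)\, e_{|a|} - \operatorname{sgn}(b)\, e_{|b|}$ with the convention $e_0 = 0$; conversely every edge of $G$ contributes a hyperplane of that normal direction. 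The first step is to verify this against \cref{tab:signedsymmetric}: positive links contribute the direction $e_i - e_j$, negative links contribute $e_i + e_j$, and half edges and negative loops both contribute $e_i$ --- and in particular the two hyperplanes $x_i = 0$ and $2x_i = 1$ attached to a negative loop $v_i v_{-i}$ share the single direction $e_i$.

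Next I would translate ``$w$ is orthogonal to every normal'' into a condition on $G$. For $w \in \R^n$ define $g \colon V \to \R$ by $g(v_0) = 0$, $g(v_i) = w_i$ and $g(v_{-i}) = -w_i$; this is a linear bijection between $\R^n$ and the space of functions on $V$ with $g \circ \iota = -g$, where $\iota \colon v_a \mapsto v_{-a}$ is the defining involution. A direct computation gives $\nu_{ab} \cdot w = g(v_a) - g(v_b)$ for every edge $v_a v_b$, so $w$ is orthogonal to all the normals of $\cB_\text{tor}(G)$ if and only if $g$ is constant along every edge, equivalently constant on each connected component of $G$. Hence $\cB_\text{tor}(G)$ is essential if and only if the only function $g$ with $g \circ \iota = -g$ that is constant on each component is $g \equiv 0$.

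The last step is to decide when a nonzero such $g$ exists. If a component-wise constant $g$ with $g \circ \iota = -g$ takes the value $c$ on a component $C$, then it takes the value $-c$ on $\iota(C)$; thus $\iota(C) = C$ forces $c = 0$, whereas if $\iota(C) \ne C$ then $c$ is unconstrained and ``$1$ on $C$, $-1$ on $\iota(C)$, $0$ elsewhere'' is a nonzero valid choice. So a nonzero such $g$ exists precisely when some connected component of $G$ is not $\iota$-invariant. Combining this with the elementary equivalence ``$G$ is weakly connected $\iff$ every connected component of $G$ is $\iota$-invariant'' --- if the component of $v_i$ is $\iota$-invariant it contains $v_{-i} = \iota(v_i)$; conversely, if $v_j$ and $v_{-j}$ always lie in a common component then every component, including the one containing $v_0$, is fixed setwise by $\iota$ --- completes the proof.

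The one place that requires genuine care is the first step: checking that the uniform description ``$x_a = x_b$'' really accounts for every hyperplane in the definition of $\cB_\text{tor}(G)$, including the degenerate inputs --- the vertex $v_0$, the edges $v_0 v_{\pm i}$ coming from a half edge, and the two hyperplanes $x_i = 0$ and $2x_i = 1$ coming from a negative loop. After that the argument is merely ``constant on edges equals constant on components'' together with the involution. A reader fluent in the frame matroid could instead recognize the vectors $\nu_{ab}$ as the columns of the incidence matrix of the signed graph associated to $G$ and invoke the standard rank formula for signed-graphic matroids, but the argument above is self-contained.
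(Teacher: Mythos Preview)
Your proof is correct and follows essentially the same approach as the paper: both analyze the orthogonal complement of the normal vectors and identify a nonzero vector there with a nonzero $\iota$-anti-symmetric vertex labeling that is constant on connected components. Your packaging via the function $g$ and the criterion ``every component is $\iota$-invariant'' is somewhat cleaner than the paper's explicit construction of $\vec{e}_S$ and its path-summing argument, but the substance is identical.
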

\begin{proof}
If $G$ is not weakly connected, there exists index $i>0$ such that $v_i$ and $v_{-i}$ are not connected. Consider the connected component $S$ containing $v_i$. There are two facts about $S$.
\begin{itemize}
    \item $v_0\notin S$. Otherwise by symmetry of $G$, there is a path $v_i\to v_0\to v_{-i}$;
    \item If $v_j\in S$, then $v_{-j}\notin S$. This is because $v_i$ and $v_j$ are connected implies $v_{-j}$ and $v_{-i}$ are connected. 
\end{itemize}
Denote the connected component $S=\{v_i,v_{i_1},\dots,v_{i_p}\}\cup\{v_{-j_1},\dots,v_{-j_q}\}$ with distinct indices $i_1,\dots,i_p,j_1,\dots,j_q>0$. Then all normal vectors in $\cB_\text{tor}(G)$ are perpendicular the the vector
\[\vec{e}_S=\vec{e}_{i}+\vec{e}_{i_1}+\cdots+\vec{e}_{i_p}-\vec{e}_{j_1}-\cdots-\vec{e}_{j_q},\]
where $\vec{e}_i$ is the vector with a $1$ in the $i^\text{th}$ coordinate and $0$'s elsewhere.

If $\cB_\text{tor}(G)$ is not essential, then there exists a non-zero vector $\vec{w}=(w_1,\dots,w_n)$ which is perpendicular to all normal vectors. Consider a non-zero entry $w_i\neq 0$. We claim that $v_i$ and $v_{-i}$ are not connected in $G$. Otherwise there is a path from $v_i$ to $v_{-i}$. If we sum up the normal vectors associated to each edge along the path, we get $2\vec{e}_i$. Since $\vec{w}$ is perpendicular to all normal vectors, we have $w_i=\vec{w}\cdot \vec{e}_i=0$, which is a contradiction.
\end{proof}

\subsection{Chambers of $\cB_\text{tor}(G)$ and Acyclic Orientations of $G$}

A point $x$ in $\R^n/\Z^n$ does not have a well-defined coordinate $(x_1,\dots,x_n)$. However, if we translate every coordinate $x_i$ by an integer we can assume each $x_i$ lies in the interval $(-0.5,0.5]$, and this coordinate is well-defined. For each point $x=(x_1,x_2,\dots,x_n)\in \R^n/\Z^n-\cB_\text{tor}(G)$, we denote $x_0=0$ and $x_{-i}=-x_i$ for any $1\leq i\leq n$. Now we define an acyclic orientation $\omega(x)$ of $G$ as follows: for any $v_iv_j\in E(G)$ ($i,j\in[-n,n]$), since $x_i\neq x_j$, we have either
\begin{itemize}
    \item $x_i<x_j$, then we direct $v_j\to v_i$, or
    \item $x_i>x_j$, then we direct $v_i\to v_j$.
\end{itemize}
Notice that this orientation $\omega(x)$ is indeed symmetric since $x_i<x_j\iff x_{-i}>x_{-j}$. Denote this map $x\mapsto \omega(x)$ as $\beta_G:(\R^n/\Z^n-\cB_\text{tor}(G))\to\text{Acyc}(G)$.

\begin{theorem}\label{thm:chamandacyc}
The map $\beta_G$ induces an isomorphism $\Bar{\beta}_G$ from $\text{Cham}(\cB_\text{tor}(G))$ to $\text{Acyc}(G)/\sim$ as follows:
\[\begin{tikzcd}
    \R^n/\Z^n-\cB_\text{tor}(G) \arrow[r, "\beta_G"] \arrow[d, two heads]
    & \text{Acyc}(G) \arrow[d, two heads] \\
    \text{Cham}(\cB_\text{tor}(G)) \arrow[r, dashed, "\Bar{\beta}_G"] & \text{Acyc}(G)/\sim
\end{tikzcd}\]
In other words, two points $x,y$ lie in the same chamber in $\text{Cham}(\cB_\text{tor}(G))$ if and only if $\beta_G(x)\sim\beta_G(y)$.
\end{theorem}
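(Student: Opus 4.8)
The plan is to prove the two directions of the ``if and only if'' separately, showing first that $\beta_G$ descends to a well-defined map on chambers (i.e. points in the same chamber give flip-equivalent orientations), and then that this induced map is injective (i.e. flip-equivalent orientations come from the same chamber), with surjectivity being essentially built into the construction. The conceptual heart of the argument is the dictionary between the combinatorial flip move and walking from one toric chamber to an adjacent one across a single toric hyperplane.

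First I would establish that $\beta_G$ is constant on the closure-minus-boundary of each chamber by a connectedness argument: the set of $x$ with a fixed orientation $\omega$ is exactly the (open) region cut out by the inequalities $x_i \lessgtr x_j$ dictated by $\omega$ on the affine side, and I claim a toric chamber $C$ is a union of (projections of) such affine regions that are related by flips. More precisely, lift $C$ to the affine arrangement $\cB_\text{aff}(G)$: an affine chamber $\tilde C$ maps to an affine acyclic orientation, but because we passed to the quotient $\R^n/\Z^n$, two affine chambers $\tilde C$ and $\tilde C + \vec v$ (for an integer vector $\vec v$) project to the same toric region, and I must check that their orientations differ by a sequence of flips. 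The key computation: crossing the toric hyperplane $\{x_i = x_j\}$ or $\{x_i + x_j = 0\}$ corresponds on the affine side to passing through a wall where only the relative order of $x_i, x_j$ changes, but walking ``around the torus'' in the $x_i$-direction (decreasing $x_i$ until it wraps from $-0.5$ back to $0.5$, or the dual move for the $\{2x_i=1\}$ hyperplane) is exactly the move that turns $v_i$ from a sink into a source — i.e. a flip of the source-sink pair $(v_i, v_{-i})$. So I would: (1) identify the adjacency graph of affine chambers whose union is a toric chamber, (2) show each edge of that graph is a flip, hence all orientations in one toric chamber are flip-equivalent.

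For the converse, I would show that a single flip at $(v_i, v_{-i})$ does not change the toric chamber. If $\omega$ has $v_i$ as a sink, then on the affine side $x_i$ can be taken to be a local minimum of the coordinates of neighbors of $v_i$ (and $x_{-i} = -x_i$ a local maximum for the neighbors of $v_{-i}$); decreasing $x_i$ continuously, we never cross any hyperplane of $\cB_\text{aff}(G)$ involving $v_i$ until $x_i$ reaches $-0.5$ (the hyperplanes $\{x_i = x_j \pm 1\}$, $\{x_i + x_j = \pm 1\}$, $\{x_i = 0, \pm 1\}$ sit at the boundary of the fundamental domain), and in the torus this path stays inside a single chamber while arriving at the orientation $\omega'$ with $v_i$ a source. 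The case $v_iv_{-i} \in E$ (where the hyperplanes $\{x_i = 0\}$ and $\{2x_i = 1\}$ both appear) needs a separate but parallel check, and here the definition of ``weakly connected'' plus the essentiality lemma guarantees we stay on a torus of the right dimension. Then surjectivity of $\bar\beta_G$ is immediate: every symmetric acyclic orientation $\omega$ is realized by a generic point (take $x_i$ to be a small generic multiple of the position of $v_i$ in a topological sort, then project), so the induced map is onto $\text{Acyc}(G)/\sim$.

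The main obstacle I anticipate is the bookkeeping in the first direction: verifying that the ``wrap-around'' paths in the torus generate exactly the flip equivalence relation, no more and no less. It is easy to see a flip keeps you in one chamber, but to see that one toric chamber cannot contain two orientations that are \emph{not} flip-equivalent, I need to argue that the affine-chamber adjacency graph covering a given toric chamber is connected \emph{through flips only} — that crossing no hyperplane at all (within the torus) can only be decomposed into elementary wrap-around moves, each of which is a flip. This likely requires a careful induction on the number of lattice translates involved, or an appeal to the structure of the affine graphical arrangement (it is the Coxeter-type arrangement of type $B_n$ with some hyperplanes deleted), and is where I would spend the bulk of the proof.
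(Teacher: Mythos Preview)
Your three-part structure (well-definedness, injectivity, surjectivity) matches the paper's exactly, and your description of the converse direction (a flip at $(v_i,v_{-i})$ is realized by pushing $x_i$ around the torus without crossing a hyperplane) and of surjectivity (realize $\omega$ by a point coming from a symmetric linear extension) is essentially the paper's argument verbatim.

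Where you diverge is in the well-definedness direction, and there you are making it harder than it is. You frame it as a question about the adjacency graph of affine chambers lying over a single toric chamber, and flag as the ``main obstacle'' whether that graph is connected through elementary $\pm\vec e_i$ translations. The paper sidesteps this entirely: given two points $x,y$ in the same toric chamber, connect them by any path inside the open chamber; since the chamber is open you may take the path to be a concatenation of axis-parallel segments. Along a segment in the $x_i$-direction, $\beta_G$ can only change when the normalized coordinate $x_i$ crosses the fundamental-domain boundary $x_i=1/2$. At that moment $v_i$ is necessarily a source (and $v_{-i}$ a sink), and the change in $\beta_G$ is precisely the flip at $(v_i,v_{-i})$. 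Moreover this crossing can occur only when $v_iv_{-i}\notin E$, since otherwise $\{2x_i=1\}$ is itself a hyperplane of the arrangement and the path would leave the chamber. That is the whole argument: no induction on lattice translates, no appeal to type-$B_n$ Coxeter structure, and no separate case analysis for $v_iv_{-i}\in E$ (that case is simply forbidden on both sides, so there is nothing to check). Your affine-chamber formulation is not wrong, but the axis-parallel path in the torus \emph{is} the proof that your adjacency graph is connected, so once you see that, the ``obstacle'' evaporates.

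One small slip: you write at the outset that $\beta_G$ is ``constant on the closure-minus-boundary of each chamber''. It is not; a single toric chamber typically carries several orientations, and the content of the theorem is exactly that they are flip-equivalent. You correct yourself a sentence later, but the phrasing should be fixed.
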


\begin{proof}
\underline{Well-definition:} We need to show that if two points $x,y$ lie in the same chamber $c$, then $\beta_G(x)\sim \beta_G(y)$. There exists a path $\gamma$ in $c$ between $x$ and $y$. Since $c$ is open, we can assume $\gamma$ takes steps in coordinate directions only, and therefore reduce to the case where $x$ and $y$ differ in only one coordinate $x_i\neq y_i$. Since $\beta_G(x)$ only changes when passing through the boundary line $x_i=0.5$, one may assume $x_i=0.5-\varepsilon$ and $y_i=-0.5+\varepsilon$ for some arbitrarily small $\varepsilon>0$. In such case $\beta_G(x)$ and $\beta_G(y)$ differs by flipping the source-sink pair $(v_i,v_{-i})$, so $\beta_G(x)\sim\beta_G(y)$.

\underline{Surjectivity:} We only need to show that $\beta_G$ is surjective. For any symmetric acyclic orientation $\omega$ of $G$, we can find a symmetric linear extension
\[\omega_{-n}<\cdots <\omega_{-1}<\omega_{0}<\omega_1<\cdots<\omega_n\]
of all vertices $V(G)=[-n,n]$ that is compatible with $\omega$, and that $\omega_i+\omega_{-i}=0$ for all $i=0,1,\dots,n$. Then we choose real numbers
\[-0.5<x_{\omega_{-n}}<\cdots<x_{\omega_{-1}}<x_{\omega_0}=0<x_{\omega_1}<\cdots<x_{\omega_n}<0.5\]
and let $x=(x_1,\dots,x_n)$, so that $\beta_G(x)=\omega$.

\underline{Injectivity:} We need to show that for any two points $x,y$, if $\beta_G(x)=\beta_G(y)$ or $\beta_G(x)$ and $\beta_G(y)$ differ by a flip, then $x,y$ lie in the same chamber. If $\beta_G(x)=\beta_G(y)$, then the segment connecting $x$ and $y$ will not cross any hyperplanes. Therefore $x$ and $y$ lie in the same chamber. If $\beta_G(x)$ and $\beta_G(y)$ differ by a source-sink flip at $(v_i,v_{-i})$, assume $v_i$ is the source and $v_{-i}$ is the sink. Then the segment connecting $x$ and $y+\vec{e}_i=(y_1,\dots,y_{i-1},y_i+1,\dots,y_n)$ will not cross any hyperplanes. Therefore $x$ and $y$ lie in the same chamber.
\end{proof}

\subsection{Chambers of $\cB_\text{tor}(G)$ and the Bivariate Chromatic Polynomial of $G$}

According to \cref{thm:chamandchar}, the number of chambers $|\text{Cham}(\cB_\text{tor}(G))|$ is related to the toric characteristic polynomial $\chi_{\cB_\text{tor}(G)}$. The following lemma in \cite{Ehren_AffineToric_09} implies a relationship between $\chi_{\cB_\text{tor}(G)}$ and the bivariate chromatic polynomial $\chi_G$.

\begin{lemma}[{\cite[Theorem~3.7]{Ehren_AffineToric_09}}]\label{lm:finitefield}
Given a toric hyperplane arrangement $\cH_\text{tor}$, there exists infinitely many positive integers $q$ such that the toric characteristic polynomial evaluated at $q$ is given by the number of lattice points in $\left(\frac{1}{q}\Z\right)^n/\Z^n$ that
do not lie on any of the toric hyperplanes $H_i$, that is,
\[\chi_{\cH}(q)=\abs{\left(\frac{1}{q}\Z\right)^n\Big/\Z^n-\cH_\text{tor}}.\]
\end{lemma}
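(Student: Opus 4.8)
The plan is to run a $\Z/q\Z$‑point‑counting argument (the toric analogue of the Crapo--Rota / Athanasiadis finite‑field method). \textbf{Step 1 (normalization and reduction to a congruence count).} After clearing denominators I would write each toric hyperplane as $H_i:\vec h_i\cdot\vec x=b_i$ with $\vec h_i\in\Z^n$ a primitive integer vector and $b_i$ rational, and fix a common denominator $D$ of the $b_i$. For a positive integer $q$ with $D\mid q$, multiplication by $q$ identifies the group of $\tfrac1q$‑torsion points $(\tfrac1q\Z)^n/\Z^n$ with $(\Z/q\Z)^n$ in such a way that a point $\bar x$ lies on $H_i$ if and only if $\vec h_i\cdot(q\bar x)\equiv qb_i\pmod q$. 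More generally, for every $S\subseteq\{1,\dots,m\}$ one checks directly that a $\tfrac1q$‑torsion point lies on $\bigcap_{i\in S}H_i$ exactly when its image in $(\Z/q\Z)^n$ solves the system $\{\vec h_i\cdot\vec a\equiv qb_i:i\in S\}$; in particular this solution set is empty precisely when $\bigcap_{i\in S}H_i=\varnothing$, and since distinct connected components of $\bigcap_{i\in S}H_i$ are disjoint in $\R^n/\Z^n$, it is the disjoint union, over those components $X$, of the sets of $\tfrac1q$‑torsion points lying on $X$.

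\textbf{Step 2 (Möbius inversion over $\cL(\cH)$).} Write $N_q(X)$ for the number of $\tfrac1q$‑torsion points on a flat $X\in\cL(\cH)$, and suppose for the moment that there is an infinite family of $q$ for which $N_q(X)=q^{\dim X}$ for all $X$. To each $\tfrac1q$‑torsion point $p$ I attach the unique smallest subspace $F(p)\in\cL(\cH)$ containing it; this exists because the elements of $\cL(\cH)$ through $p$ are exactly the components through $p$ of the hyperplanes through $p$ and their intersections, and that family has a maximum in the reverse‑inclusion order. Then $N_q(X)=\sum_{Y\ge X}\#\{p:F(p)=Y\}$, so Möbius inversion in the finite poset $\cL(\cH)$ gives $\#\{p:F(p)=X\}=\sum_{Y\ge X}\mu(X,Y)\,N_q(Y)$. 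Taking $X=\hat 0$ and noting that $F(p)=\hat 0$ means exactly that $p$ lies on no hyperplane,
\[\abs{\left(\tfrac1q\Z\right)^n\Big/\Z^n-\cH_\text{tor}}=\#\{p:F(p)=\hat 0\}=\sum_{Y\in\cL(\cH)}\mu(\hat 0,Y)\,q^{\dim Y}=\chi_\cH(q),\]
which is the claimed identity.

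\textbf{Step 3 (the hard part: $N_q(X)=q^{\dim X}$ and the choice of $q$).} This is the step I expect to be the real obstacle, and it is where the admissible $q$ get pinned down. Fix $X\in\cL(\cH)$, set $S=\{i:X\subseteq H_i\}$, and let $A_S$ be the matrix with rows $\vec h_i$ ($i\in S$). Passing to a Smith normal form $A_S=U\Delta V$ with $U,V$ unimodular and $\Delta$ carrying invariant factors $d_1\mid\dots\mid d_k$ ($k=\operatorname{rank}A_S$), the unimodular change of coordinates $\vec y=V\vec x$ (a bijection both on $\tfrac1q$‑torsion points and on $\Z^n$) turns $\bigcap_{i\in S}H_i$, which is nonempty since it contains $X$, into a disjoint union of $d_1\cdots d_k$ parallel coordinate subtori $\{y_i=\gamma_i\ (i\le k),\ y_i\text{ free}\ (i>k)\}$ with each $\gamma_i$ a rational in $\tfrac1{Dd_i}\Z$, and identifies $X$ with one of them. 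Hence $N_q(X)$ equals the number of $\tfrac1q$‑torsion points whose first $k$ coordinates in the $\vec y$‑frame equal the prescribed $\gamma_i$ and whose remaining $n-k$ coordinates are arbitrary. Now impose $N\mid q$ for a fixed $N$ divisible by $D$ and by every invariant factor $d_i(S)$ occurring over all $S$ --- a condition satisfied by the infinitely many multiples of $N$; then each $\gamma_i$ is itself a $\tfrac1q$‑torsion value, the first‑$k$‑coordinate constraint has a unique solution in each coordinate, and the count is exactly $q^{n-k}=q^{\dim X}$. The delicate points to get right are that the component count produced by the Smith form really matches the stratification recorded by $\cL(\cH)$, and that it is precisely the divisibility $N\mid q$ (rather than coprimality to the $d_i$) that forces the rational coordinates $\gamma_i$ into $\tfrac1q\Z/\Z$ --- dropping that hypothesis is exactly what makes the identity fail for ``bad'' values of $q$.
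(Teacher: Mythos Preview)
The paper does not actually prove this lemma: it is quoted verbatim as \cite[Theorem~3.7]{Ehren_AffineToric_09} in the preliminaries section, which explicitly announces that the basic facts about toric arrangements ``are stated without proof'' and refers the reader to that source. So there is no ``paper's own proof'' to compare against.

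That said, your proposal is a faithful reconstruction of the argument in the cited reference. The three ingredients --- reducing to a congruence count in $(\Z/q\Z)^n$, M\"obius inversion over the toric intersection poset once one knows $N_q(X)=q^{\dim X}$, and the Smith normal form computation that produces the divisibility condition on $q$ --- are exactly what Ehrenborg--Readdy--Slone do. One point to tighten in Step~2: the toric intersection poset $\cL(\cH)$ is in general only a meet-semilattice, not a lattice, so the existence of the unique smallest flat $F(p)$ through a given torsion point $p$ deserves a sentence; your parenthetical gestures at the right reason (take the component through $p$ of the intersection of all hyperplanes through $p$), but it should be stated as such rather than as ``that family has a maximum in the reverse-inclusion order,'' which presupposes what is being argued. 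In Step~3, the assertion that the Smith form produces exactly $d_1\cdots d_k$ parallel subtori is correct, but be careful that the specific component $X$ you started with uses only the hyperplanes in $S=\{i:X\subseteq H_i\}$; other components of $\bigcap_{i\in S}H_i$ may lie in additional hyperplanes and hence be strictly larger in $\cL(\cH)$, so the bookkeeping that each component is a separate element of $\cL(\cH)$ with its own $N_q$ is what makes the M\"obius inversion go through. With those two clarifications your outline is complete.
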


\begin{prop}\label{prop:charandchar}
For a symmetric graph $G$, the relationship between the bivariate chromatic polynomial $\chi_G(k,l)$ and the characteristic polynomial of $\cL(\cB_\text{tor}(G))$ is the following:
\[\chi_{\cB_\text{tor}(G)}(q)=\chi_G\left(\frac{q-2}{2},2\right).\]
\end{prop}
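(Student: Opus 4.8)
The plan is to use the finite field method of \cref{lm:finitefield}. By that lemma, for infinitely many positive integers $q$ we have
\[
\chi_{\cB_\text{tor}(G)}(q)=\abs{\left(\tfrac{1}{q}\Z\right)^n\Big/\Z^n-\cB_\text{tor}(G)},
\]
so it suffices to count, for such $q$, the number of points $(x_1,\dots,x_n)$ with each $x_i\in\frac1q\Z/\Z$ avoiding all the defining hyperplanes, and show this equals $\chi_G\!\left(\frac{q-2}{2},2\right)$. To make the comparison clean I would restrict attention to \emph{even} $q$, say $q=2m$, which is still an infinite set; then one can write each coordinate as $x_i=\frac{a_i}{2m}$ with $a_i$ ranging over a set of $2m$ residues. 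The key step is to set up a bijection between such avoiding tuples and proper $(k,2)$-colorings of $G$ with $k=\frac{q-2}{2}=m-1$, i.e.\ colorings using $\{0,\pm1,\dots,\pm(m-1)\}\cup\{0_1\}$.

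The bijection I have in mind sends a coordinate value $x_i\in\frac{1}{2m}\Z/\Z$ to a color as follows. Represent $x_i$ by its unique representative in $(-\tfrac12,\tfrac12]$; this representative is $\frac{t}{2m}$ for a unique integer $t$ with $-m< t\le m$. The hyperplanes $\{x_i=0\}$ and $\{2x_i=1\}$ (present exactly when $v_iv_{-i}\in E$, but the \emph{lattice} of coordinate values is the same regardless) single out the two values $t=0$ and $t=m$, i.e.\ $x_i\in\{0,\tfrac12\}$, as special. I will send $t=0\mapsto 0$, send $t=m$ (the value $x_i=\tfrac12=-\tfrac12$, which is its own negative) $\mapsto 0_1$, and send any other $t$ with $0<|t|<m$ to the color $\pm(\text{something in }\{1,\dots,m-1\})$ according to the sign of $t$ — concretely $t\mapsto t$ if we index the nonzero colors by $\{\pm1,\dots,\pm(m-1)\}$. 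This is a bijection from the $2m$ coordinate values to $\{0,\pm1,\dots,\pm(m-1)\}\cup\{0_1\}$, it respects negation (the map commutes with $x_i\mapsto -x_i$ and color $\mapsto$ negated color, using $0_1=-0_1$), and it forces the ``$f(v_0)=0$'' and ``$f(v_i)=-f(v_{-i})$'' conditions automatically once we declare $f(v_i)$ to be the color of $x_i$ and $f(v_0)=0$, $f(v_{-i})=-f(v_i)$. The remaining task is to check that avoiding the hyperplanes of $\cB_\text{tor}(G)$ is equivalent to the properness condition $f(v_i)\ne f(v_j)$ for every edge $v_iv_j$. I would go through the five families in the definition of $\cB_\text{tor}(G)$ one at a time: $\{x_i-x_j=0\}$ for $v_iv_j\in E$ corresponds to $f(v_i)\ne f(v_j)$; $\{x_i+x_j=0\}$ for $v_iv_{-j}\in E$ corresponds to $f(v_i)\ne -f(v_j)=f(v_{-j})$; $\{x_i=0\}$ for $v_0v_i\in E$ corresponds to $f(v_i)\ne 0=f(v_0)$; and for $v_iv_{-i}\in E$, properness asks $f(v_i)\ne f(v_{-i})=-f(v_i)$, which fails exactly when $f(v_i)\in\{0,0_1\}$ — and these are exactly the two values excluded by the pair of hyperplanes $\{x_i=0\}$ and $\{2x_i=1\}$. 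So the five families of hyperplanes match the edge constraints exactly, and the bijection carries avoiding tuples to proper $(m-1,2)$-colorings.

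Putting it together: for $q=2m$ in the infinite set supplied by \cref{lm:finitefield},
\[
\chi_{\cB_\text{tor}(G)}(q)=\abs{\left(\tfrac{1}{q}\Z\right)^n\Big/\Z^n-\cB_\text{tor}(G)}=\chi_G(m-1,2)=\chi_G\!\left(\tfrac{q-2}{2},2\right).
\]
Since two polynomials in $q$ that agree on infinitely many values are identical, the identity $\chi_{\cB_\text{tor}(G)}(q)=\chi_G\!\left(\frac{q-2}{2},2\right)$ holds as polynomials, which is the claim. The main obstacle I anticipate is purely bookkeeping: being careful that the finite-field lemma really does give us infinitely many \emph{even} $q$ (it gives infinitely many $q$; if the good set happened to contain only finitely many even integers one could instead run the argument with $q$ odd and absorb a shift, but the cleanest route is to note the good set in \cref{lm:finitefield} can be taken to contain a full arithmetic progression, hence infinitely many even values), and making sure the special role of the two hyperplanes $\{x_i=0\}$, $\{2x_i=1\}$ — which are present only when $v_iv_{-i}$ is an edge — is handled consistently with the definition of properness on such an edge. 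No genuine difficulty beyond that: the geometry has been front-loaded into \cref{lm:finitefield} and the rest is a careful matching of constraints.
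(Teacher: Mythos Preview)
Your proposal is correct and follows essentially the same route as the paper: restrict to even $q=2k+2$, set up the obvious bijection between lattice points $(\tfrac{1}{q}\Z)^n/\Z^n$ avoiding $\cB_\text{tor}(G)$ and proper $(k,2)$-colorings (with the representative $x_i=\tfrac12$ playing the role of the extra color $0_1$), and conclude by \cref{lm:finitefield} and polynomial identity. You have actually written out the edge-by-edge verification that the paper leaves as ``not hard to show,'' and you correctly flag the one bookkeeping point the paper glosses over, namely that the infinite good set of $q$'s from \cref{lm:finitefield} must contain infinitely many even integers.
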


\begin{proof}
Assume $q=2k+2$ is even. Given any point in the lattice $\left(\frac{1}{q}\Z\right)^n/\Z^n$ that
do not lie on $\cB_\text{tor}(G)$
\[x=\left(\frac{x_1}{q},\dots,\frac{x_n}{q}\right)\in \left(\frac{1}{q}\Z\right)^n\Big/\Z^n-\cB_\text{tor}(G)\]
where $x_1,\dots,x_n$ are integers in $[-k,k+1]$, we can define a unique $(k,2)$-coloring $f$ of $G$ with colors $\{0,\pm1,\dots,\pm k\}\cup\{0'\}$ given by
\[f(v_i)=\begin{cases}
0' &\text{if }x_i=k+1,\\
x_i &\text{otherwise.}
\end{cases}\]
It is not hard to show that this coloring is indeed proper, and all proper colorings come from this way. Therefore, the number of such lattice points $x$ is equal to $\chi_G(k,2)$. On the other hand, by \cref{lm:finitefield} the same number is equal to $\chi_{\cB_\text{tor}(G)}(q)$ for infinitely many $q$. This concludes our proof.
\end{proof}

We are now in position to prove the main theorem.

\begingroup
\def\thetheorem{\ref{thm:main}}
\begin{theorem}
For a weakly connected symmetric graph $G$ with $2n+1$ vertices, we have
\[|\text{Acyc}(G)/\sim|=(-1)^n\chi_G(-1,2).\]
\end{theorem}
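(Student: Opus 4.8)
The plan is to assemble the three ingredients developed in this section into a short chain of equalities, with no new work required beyond the results already proved. The ingredients are: the correspondence $\Bar{\beta}_G$ between toric chambers and flip-classes of acyclic orientations (\cref{thm:chamandacyc}), the count of toric chambers via the toric characteristic polynomial (\cref{thm:chamandchar}), and the identification of that characteristic polynomial with a specialization of $\chi_G$ (\cref{prop:charandchar}).

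Concretely, I would argue as follows. First, \cref{thm:chamandacyc} gives a bijection $\text{Cham}(\cB_\text{tor}(G)) \cong \text{Acyc}(G)/\sim$, so $|\text{Acyc}(G)/\sim| = |\text{Cham}(\cB_\text{tor}(G))|$. Next, since $G$ is assumed weakly connected, the lemma above guarantees that $\cB_\text{tor}(G)$ is essential, so \cref{thm:chamandchar} applies and yields $|\text{Cham}(\cB_\text{tor}(G))| = (-1)^n \chi_{\cB_\text{tor}(G)}(0)$. Finally, setting $q = 0$ in \cref{prop:charandchar} gives $\chi_{\cB_\text{tor}(G)}(0) = \chi_G\!\left(\tfrac{0-2}{2}, 2\right) = \chi_G(-1,2)$. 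Stringing the three equalities together produces $|\text{Acyc}(G)/\sim| = (-1)^n \chi_G(-1,2)$, which is the claim.

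The only step that deserves care — and the closest thing to an obstacle — is the evaluation at $q = 0$. \cref{prop:charandchar} is proved by exhibiting agreement of the two sides only at the infinitely many integers $q = 2k+2$ with $k \ge 1$, so to legitimately substitute $q = 0$ one must know that both sides are genuine polynomials in $q$. This is automatic: $\chi_{\cB_\text{tor}(G)}$ is a polynomial by its very definition as the characteristic polynomial of the poset $\cL(\cB_\text{tor}(G))$, and $\chi_G(k,l)$ is a polynomial in $k$ and $l$ by \cite[Theorem~6.5]{Gooda_Tuttedichromate_21}; two polynomials agreeing at infinitely many points are equal, hence the identity persists at $q = 0$. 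Once this is noted, the theorem follows by the substitution above, and all the genuine content has been absorbed into \cref{thm:chamandacyc}, \cref{thm:chamandchar}, and \cref{prop:charandchar}.
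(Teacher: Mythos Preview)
Your proposal is correct and follows exactly the paper's own argument: chain together \cref{thm:chamandacyc}, \cref{thm:chamandchar} (using the weak-connectedness lemma to ensure essentiality), and \cref{prop:charandchar} evaluated at $q=0$. Your extra remark justifying the substitution $q=0$ via polynomiality is a welcome clarification but introduces nothing new methodologically.
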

\addtocounter{theorem}{-1}
\endgroup

\begin{proof}
By \cref{thm:chamandacyc}, \cref{thm:chamandchar} and \cref{prop:charandchar}, we have
\[|\text{Acyc}(G)/\sim|=|\text{Cham }\cB_\text{tor}(G)|=(-1)^n\chi_{\cB_\text{tor}(G)}(0)=(-1)^n\chi_G(-1,2).\]
\end{proof}

\begin{remark}
Novik et al. \cite{Novik_SyzygiesOriented_00} proved that the number of chambers in a toric arrangement $|\text{Cham}(\cH_\text{tor})|$ is equal to the evaluation of the Tutte polynomial $T_\cH(1,0)$ if $\cH$ is a unimodular hyperplane arrangement (if the principal normal vectors of $\cH$ form a unimodular matrix). It turns out that $\cB(G)$ is one of the simplest non-unimodular hyperplane arrangement, and this work might shed light on how to generalize the results in \cite{Novik_SyzygiesOriented_00} to the general non-unimodular case. In \cite{DAdd_Arithmeticmatroids_11}, D’Adderio and Moci proposed using \emph{arithmetic Tutte polynomials} to calculate the number of chambers in a toric arrangement, but their method only works when $\cH$ is central, and cannot deal with hyperplanes such as $2x_i=1$ in our case. So far, it is unknown how are the arithmetic Tutte polynomials related to $\chi_G(k,l)$ in this paper.
\end{remark}
\section{A second proof: Deletion-Contraction}\label{sec:delcont}

In this section, we present an alternative proof to the fact that the number of chambers in $\cB_\text{tor}(G)$ equals $(-1)^n\chi_G(-1,2)$ using deletion-contraction recurrences.

\begin{definition}
For a toric hyperplane arrangement $\cH$ of dimension $n$ and a toric hyperplane $H\in \cH$, the intersection $\cH\cap H$ is also a toric hyperplane arrangement of dimension $n-1$. We call $\cH\cap H$ a \emph{restriction}. If we take away the hyperplane $H$, the remaining hyperplane arrangement $\cH-H$ is called a \emph{deletion}.
\end{definition}

\begin{theorem}[Deletion-restriction of toric hyperplane arrangements]\label{thm:delres}
Given an essential toric hyperplane arrangement $\cH$ and a hyperplane $H\in\cH$,
\[|\text{Cham }\cH|=\begin{cases}
|\text{Cham }(\cH\cap H)| + |\text{Cham }(\cH-H)| &\text{if $\cH-H$ is essential},\\
|\text{Cham }(\cH\cap H)| &\text{otherwise}.
\end{cases}\]
\end{theorem}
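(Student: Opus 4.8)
The plan is to count chambers by comparing $\cH$ with its deletion $\cH-H$, tracking exactly what happens when the hyperplane $H$ is put back in. Start from $\cH-H$ and its chambers in $\R^n/\Z^n$. Adding the toric hyperplane $H$ back cuts some of these chambers into two pieces and leaves the others untouched. So if $N$ denotes the number of chambers of $\cH - H$ that are split by $H$, then
\[
|\text{Cham }\cH| = |\text{Cham }(\cH-H)| + N,
\]
and the whole theorem reduces to the identification $N = |\text{Cham }(\cH\cap H)|$ in the essential case, together with the bookkeeping in the non-essential case. To match $N$ with chambers of the restriction $\cH\cap H$, I would argue that a chamber $c$ of $\cH-H$ is split by $H$ precisely when $c\cap H\neq\varnothing$, and that in that case $c\cap H$ is a single (nonempty, connected) chamber of the $(n-1)$-dimensional arrangement $\cH\cap H$ sitting inside $H\cong\R^{n-1}/\Z^{n-1}$. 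Conversely every chamber of $\cH\cap H$ arises this way from a unique chamber of $\cH-H$. This is the content-bearing step and the one I expect to be the main obstacle: in the toric setting one must be careful that $H$ is connected (a genuine subtorus, not a disjoint union of several parallel copies) so that "$c$ meets $H$'' translates cleanly into "$c$ is cut into exactly two pieces''; if $H$ were disconnected a single chamber could be chopped into more than two pieces and the count would break. Here $H$ is a toric hyperplane, i.e.\ the image of a single rational affine hyperplane, so one should verify (or invoke from \cite{Ehren_AffineToric_09}) that it is connected, or reduce to that case by passing to the affine cover where one works locally around each component of $H$.

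For the local splitting claim I would lift to the affine arrangement $\cH_\text{aff}=\pi^{-1}(\cH_\text{tor})$: near a point $p$ of $H$, the arrangement looks like an ordinary (affine) hyperplane arrangement in $\R^n$, and the classical fact that an affine hyperplane either misses a chamber or divides it into exactly two convex pieces applies. Globalizing, a chamber $c$ of $\cH - H$ with $c\cap H\neq\varnothing$ has $c\setminus H$ with exactly two connected components whenever $H$ is connected and $c$ is connected, because $c\cap H$ is itself connected (it is a chamber of the induced arrangement on the connected manifold $H$) and "cuts through'' $c$. The two new chambers of $\cH$ replace the one old chamber of $\cH-H$, contributing the $+1$ per split chamber that assembles into $N = |\text{Cham }(\cH\cap H)|$.

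Finally I would handle the dichotomy on whether $\cH-H$ is essential. If $\cH-H$ is essential, all of the above goes through and we get $|\text{Cham }\cH| = |\text{Cham }(\cH-H)| + |\text{Cham }(\cH\cap H)|$. If $\cH-H$ is not essential, then its chambers are unbounded in the direction $\vec w$ spanning the common perpendicular of the remaining normals; in the torus this means $\cH-H$ has no bounded chambers and, in fact, the complement $\R^n/\Z^n - (\cH-H)$ is connected (it fibers over the circle in the $\vec w$-direction without being cut), so $|\text{Cham}(\cH-H)| = 1$ and every chamber of $\cH$ arises from splitting that single chamber by $H$; since $H$ is transverse to the $\vec w$-direction (otherwise $\cH$ itself would fail to be essential, contrary to hypothesis), the count becomes exactly $|\text{Cham }(\cH\cap H)|$, with the ambient chamber contributing nothing extra. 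I would double-check this last point carefully — that the "$+|\text{Cham}(\cH-H)|$'' term genuinely drops rather than contributing a spurious $1$ — since it is the subtle asymmetry between the affine and toric deletion-restriction formulas, and is precisely the place where the toric nature of the problem (compactness of the torus) is used.
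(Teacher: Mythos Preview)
The paper does not actually give a proof here; it simply asserts that the argument is analogous to that of \cite[Theorem~3.6]{Ehren_AffineToric_09}, which establishes $|\text{Cham}(\cH_\text{tor})|=(-1)^n\chi_\cH(0)$ via the intersection poset and M\"obius/Euler-characteristic methods. Your direct chamber-splitting argument is a different, more elementary route, and the overall strategy is the right one. However, two of your key claims fail as stated.

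In the essential case, the assertion that $c\cap H$ is a \emph{single} chamber of $\cH\cap H$, and hence that $c\setminus H$ has exactly two components, is false in general for toric arrangements. Take $\cH-H=\{x_1=0,\ x_2=0\}$ on $\R^2/\Z^2$ (essential, with a unique chamber $c\cong(0,1)^2$) and $H=\pi(\{x_1-2x_2=0\})$. Then $\pi^{-1}(H)$ meets the convex lift $(0,1)^2$ in \emph{two} parallel segments, so $c\cap H$ has two components and $c\setminus H$ has three. The formula still holds ($3=1+2$), but not via the one-to-one matching you propose. The repair: when $\cH-H$ is essential one checks that $\pi$ is injective on each affine chamber $\tilde c$ (because the normals span), so each toric chamber is homeomorphic to a bounded convex polytope; the $m$ parallel lifts of $H$ meeting $\tilde c$ then cut it into $m+1$ convex pieces, and summing the increments $m$ over all $c$ recovers $|\text{Cham}(\cH\cap H)|$.

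In the non-essential case, your claim that $\R^n/\Z^n-(\cH-H)$ is connected is also false: $\cH-H=\{x_1=0,\ x_1=\tfrac12\}$ on $\R^2/\Z^2$ is non-essential but has two chambers. The correct picture is that each chamber of $\cH-H$ is fibred by $\vec w$-circles; inserting the transverse hyperplane $H$ cuts each such circle, so the number of components of $c\setminus H$ equals the number of components of $c\cap H$ (a circle minus $m$ points has $m$ arcs, not $m+1$). Summing over $c$ gives $|\text{Cham}\,\cH|=|\text{Cham}(\cH\cap H)|$ directly, with no additive $|\text{Cham}(\cH-H)|$ term---your instinct about the asymmetry is right, but the mechanism is not that there is only one ambient chamber.
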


\begin{proof}
Proof analogous to the proof of \cref{thm:chamandchar} (see \cite[Theorem~3.6]{Ehren_AffineToric_09}).
\end{proof}

\begin{definition}
Given a symmetric graph $G=(V,E)$, an edge $e=v_iv_j\in E$ is \emph{contractible} if $i,j\neq 0$ and $j\neq -i$. Given a contractible edge $e=v_iv_j$, denote the edge $-e=v_{-i}v_{-j}\in E$. The \emph{deletion} $G-e$ is the graph obtained by deleting edges $e$ and $-e$ from $G$. The \emph{contraction} $G/e$ is the graph obtained by contracting edges $e$ and $-e$ in $G$.
\end{definition}

\begin{theorem}[Deletion-contraction of colorings]\label{thm:delcont}
Given a symmetric graph $G$ and a contractible edge $e$, we have
\[\chi_G(k,l)=\chi_{G-e}(k,l)-\chi_{G/e}(k,l).\]
\end{theorem}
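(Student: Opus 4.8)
The plan is to run the classical deletion--contraction argument at the level of colorings, with the only extra care being the bookkeeping forced by the symmetry involution. Since both sides of the asserted identity are polynomials in $k$ and $l$ (by \cite[Theorem~6.5]{Gooda_Tuttedichromate_21}), it suffices to prove equality as a count of proper colorings for every fixed integer $k,l\ge 1$; the polynomial identity then follows because the integer points $k,l\ge 1$ are Zariski dense. Fix a contractible edge $e=v_iv_j$, so $i,j\neq 0$ and $j\neq -i$. First I would note that the four indices $i,j,-i,-j$ are then pairwise distinct, so $e$ and $-e=v_{-i}v_{-j}$ are vertex-disjoint; hence $G/e$ is again a well-defined symmetric graph on $2(n-1)+1$ vertices, with the merged vertex $w=\{v_i,v_j\}$ and its mirror $w'=\{v_{-i},v_{-j}\}$.

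Next I would record one elementary fact used repeatedly: negation $c\mapsto -c$ is an involution of the color set $\{0,\pm1,\dots,\pm k\}\cup\{0_1,\dots,0_{l-1}\}$ (fixing $0$ and each $0_t$). Consequently, for any assignment $f$ satisfying the symmetry condition $f(v_{-a})=-f(v_a)$, one has $f(v_i)=f(v_j)\iff f(v_{-i})=f(v_{-j})$. This means the properness constraint imposed by $e$ is equivalent to the one imposed by $-e$, and that specifying one color on the merged pair $\{v_i,v_j\}$ (resp. $\{v_{-i},v_{-j}\}$) determines the other compatibly with symmetry; all later verifications thus reduce to checking ``one edge of each symmetric pair.''

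Then I would partition the proper $(k,l)$-colorings of the deletion $G-e$ according to whether $f(v_i)=f(v_j)$. Restriction gives a bijection between the colorings with $f(v_i)\neq f(v_j)$ and the proper $(k,l)$-colorings of $G$: adding back $e$ and $-e$ imposes exactly the constraints $f(v_i)\neq f(v_j)$ and $f(v_{-i})\neq f(v_{-j})$, and by the previous paragraph the second is automatic from the first. On the other hand, a proper $(k,l)$-coloring of $G-e$ with $f(v_i)=f(v_j)$ is precisely the data of a proper $(k,l)$-coloring of $G/e$: set $f(w)=f(v_i)=f(v_j)$ and $f(w')=f(v_{-i})=f(v_{-j})$; the edges incident to $w$ (resp. $w'$) in $G/e$ are exactly those incident to $v_i$ or $v_j$ (resp. $v_{-i}$ or $v_{-j}$) in $G-e$, and the conditions $f(v_0)=0$ and $f(w)=-f(w')$ are inherited. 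Assembling the three bijections yields $\chi_{G-e}(k,l)=\chi_G(k,l)+\chi_{G/e}(k,l)$, which rearranges to the claim.

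I do not expect a genuine obstacle here; the content is the involution bookkeeping above, everything else being the standard argument. The one point that needs a remark rather than real work is that contracting $e$ and $-e$ may create parallel edges in $G/e$ (for instance when $v_i$ and $v_j$ share a neighbor, or when $v_i\sim v_{-j}$ produces a $w$--$w'$ edge), so $G/e$ need not literally be simple under the paper's standing convention; but a proper coloring only detects whether the two endpoints of an edge receive distinct colors, so passing to the underlying simple symmetric graph changes neither $\chi_{G/e}$ nor the bijection, and no loops are created because $G$ is simple. This is the only place where the ``simple'' convention interacts with the statement, and it is purely cosmetic.
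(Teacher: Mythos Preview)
Your argument is correct and follows essentially the same approach as the paper: both partition the proper $(k,l)$-colorings of $G-e$ according to whether $f(v_i)=f(v_j)$, identify the two parts with colorings of $G$ and $G/e$ respectively, and invoke polynomiality to pass from integer points to the identity of polynomials. You simply flesh out the symmetry bookkeeping (the equivalence $f(v_i)=f(v_j)\iff f(v_{-i})=f(v_{-j})$) and the simplicity caveat for $G/e$ that the paper leaves implicit.
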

\begin{proof}
Assume $e=v_iv_j$. We only need to prove that the equality holds for any integers $k,l\geq 1$. In such case $\chi_{G-e}(k,l)$ counts the number of proper $(k,l)$-colorings of $G-e$. For each of these colorings, we can split them into two categories based whether $v_i$ and $v_j$ have the same color. If $v_i$ and $v_j$ have different colors, the number of such colorings is counted by $\chi_G(k,l)$; if $v_i$ and $v_j$ have the same color, we can contract $e=v_iv_j$ and the number of such colorings is counted by $\chi_{G/e}(k,l)$. Therefore, $\chi_{G-e}(k,l)=\chi_G(k,l)+\chi_{G/e}(k,l)$.
\end{proof}

Finally, \cref{thm:main} is a direct corollary of the following theorem.

\begin{theorem}\label{thm:mainext}
For any symmetric graph $G$ with $2n+1$ vertices, we have
\[(-1)^n\chi_G(-1,2)=\begin{cases}
|\text{Cham}(\cB_\text{tor}(G))| &\text{if $G$ is weakly connected},\\
0 &\text{otherwise}.
\end{cases}\]
\end{theorem}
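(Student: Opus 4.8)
The plan is to prove \cref{thm:mainext} by induction on the number of edges of $G$, distinguishing whether or not $G$ has a contractible edge. When $G$ has no contractible edge the statement reduces to a direct product computation; when it does, the inductive step matches the deletion--contraction recurrence for $\chi_G$ (\cref{thm:delcont}) against the deletion--restriction recurrence for toric chambers (\cref{thm:delres}).

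First I would handle the case where $G$ has no contractible edge. Then every edge is of the form $v_0v_i$ or $v_iv_{-i}$, so every hyperplane of $\cB_\text{tor}(G)$ involves a single coordinate, and $\R^n/\Z^n-\cB_\text{tor}(G)$ splits as a product over the circle factors; correspondingly the coloring constraints on distinct pairs $\{v_i,v_{-i}\}$ are independent, so $\chi_G(k,2)=\prod_{i=1}^n c_i(k)$ where $c_i(k)$ counts admissible colors of $v_i$. A short case check shows: if $i$ meets no edge then $c_i(-1)=2(-1)+2=0$ and $G$ is not weakly connected; and if $i$ meets an edge then $c_i(-1)\in\{-1,-2\}$ while the $i$-th circle factor has $1$ or $2$ chambers respectively, so $c_i(-1)$ equals minus the number of chambers of that factor. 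Multiplying, $(-1)^n\chi_G(-1,2)$ is $0$ when some coordinate is free (matching non--weak--connectivity) and equals $\prod_i(\text{chambers of factor }i)=|\text{Cham}(\cB_\text{tor}(G))|$ otherwise.

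For the inductive step, let $e=v_iv_j$ be a contractible edge and $H$ its hyperplane ($\{x_i-x_j=0\}$ or $\{x_i+x_j=0\}$ according to the signs of $i,j$). Since $G$ is simple, $e$ and $-e$ are the only edges producing $H$, so $\cB_\text{tor}(G)-H=\cB_\text{tor}(G-e)$; and restricting to $H$ while merging the coordinates of $v_j,v_{-j}$ into those of $v_i,v_{-i}$ identifies $\cB_\text{tor}(G)\cap H$ with $\cB_\text{tor}(G/e)$, which sits on $2(n-1)+1$ vertices (one checks $G/e$ has no loops, and both sides of the desired identity are unaffected by multi-edges). I would also record a small lemma: if $G-e$ is weakly connected so are $G$ and $G/e$; if $G$ is weakly connected so is $G/e$; and if $G$ is not weakly connected neither is $G/e$ (a failure of weak connectivity lifts back through the contraction). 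Now: if $G-e$ is weakly connected, \cref{thm:delres} gives $|\text{Cham}(\cB_\text{tor}(G))|=|\text{Cham}(\cB_\text{tor}(G/e))|+|\text{Cham}(\cB_\text{tor}(G-e))|$, and combining the induction hypothesis for $G/e$ and $G-e$ with $\chi_G=\chi_{G-e}-\chi_{G/e}$ yields the claim, the sign matching because $G/e$ has $2(n-1)+1$ vertices. If $G-e$ is not weakly connected, then $\cB_\text{tor}(G-e)$ is not essential and the induction hypothesis forces $\chi_{G-e}(-1,2)=0$; if in addition $G$ is weakly connected, \cref{thm:delres} gives $|\text{Cham}(\cB_\text{tor}(G))|=|\text{Cham}(\cB_\text{tor}(G/e))|=(-1)^{n-1}\chi_{G/e}(-1,2)=(-1)^n\chi_G(-1,2)$; and if $G$ is not weakly connected, then neither is $G/e$, so also $\chi_{G/e}(-1,2)=0$ and hence $\chi_G(-1,2)=0$, as required.

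I expect the main obstacle to be the bookkeeping in the inductive step rather than any one hard idea: pinning down $\cB_\text{tor}(G)-H=\cB_\text{tor}(G-e)$ and $\cB_\text{tor}(G)\cap H=\cB_\text{tor}(G/e)$ (the coordinate merge, the harmless multi-edges), verifying the weak-connectivity lemma, and threading the two branches of \cref{thm:delres} through every weak-connectivity subcase while keeping the signs $(-1)^n$ versus $(-1)^{n-1}$ consistent.
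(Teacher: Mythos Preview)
Your proposal is correct and follows essentially the same approach as the paper: induction with the base case being graphs with no contractible edge, and the inductive step matching deletion--contraction for $\chi_G$ (\cref{thm:delcont}) against deletion--restriction for toric chambers (\cref{thm:delres}). The paper's proof is much terser---it leaves the identifications $\cB_\text{tor}(G)-H=\cB_\text{tor}(G-e)$ and $\cB_\text{tor}(G)\cap H\cong\cB_\text{tor}(G/e)$, as well as the weak-connectivity case analysis, entirely implicit---so your more detailed bookkeeping is a faithful expansion of the same argument.
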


\begin{proof}
Use induction. In the base case, $G$ has the form of the following graph in \cref{fig:5}. In such case, the left hand side is $\chi_G(k,l)=(2k+l)^a\cdot(2k+l-1)^b\cdot(2k)^c$. The right hand side when $a=0$ is the toric hyperplane arrangement of toric hyperplanes $\{x_{j_p}=0\}\cup \{x_{l_q}=0\}\cup \{x_{l_q}=1/2\}$ in the tori. It cuts the tori into $2^c$ chambers. Easy to check that the number matches. The induction step follows from \cref{thm:delres} and \cref{thm:delcont}.
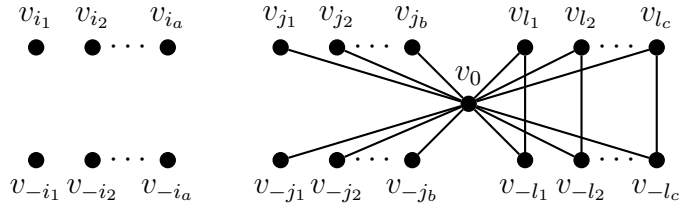
\begin{figure}[ht!]
\centering
\begin{tikzpicture}
    \SetFancyGraph
    \SetGraphUnit{1.5}
    \Vertex[L=$v_{i_1}$,Lpos=above]{1a}
    \SO[L=$v_{-i_1}$,Lpos=below](1a){-1a}
    \EA[L=$v_{i_2}$,Lpos=above,unit=0.75](1a){2a}
    \SO[L=$v_{-i_2}$,Lpos=below](2a){-2a}
    \begin{scope}[VertexStyle/.style = {draw=none}]
    \SetVertexLabelIn
    \EA[L=$\cdots$,unit=0.5](2a){3a}
    \SO[L=$\cdots$](3a){-3a}
    \end{scope}
    \EA[L=$v_{i_a}$,Lpos=above,unit=0.5](3a){4a}
    \SO[L=$v_{-i_a}$,Lpos=below](4a){-4a}
    
    \EA[L=$v_{j_1}$,Lpos=above](4a){1b}
    \SO[L=$v_{-j_1}$,Lpos=below](1b){-1b}
    \EA[L=$v_{j_2}$,Lpos=above,unit=0.75](1b){2b}
    \SO[L=$v_{-j_2}$,Lpos=below](2b){-2b}
    \begin{scope}[VertexStyle/.style = {draw=none}]
    \SetVertexLabelIn
    \EA[L=$\cdots$,unit=0.5](2b){3b}
    \SO[L=$\cdots$](3b){-3b}
    \end{scope}
    \EA[L=$v_{j_b}$,Lpos=above,unit=0.5](3b){4b}
    \SO[L=$v_{-j_b}$,Lpos=below](4b){-4b}
    \SOEA[L=$v_{0}$,Lpos=above,unit=0.75](4b){0}
    \Edges(1b,0,-1b)
    \Edges(2b,0,-2b)
    \Edges(4b,0,-4b)
    
    \NOEA[L=$v_{l_1}$,Lpos=above,unit=0.75](0){1c}
    \SO[L=$v_{-l_1}$,Lpos=below](1c){-1c}
    \EA[L=$v_{l_2}$,Lpos=above,unit=0.75](1c){2c}
    \SO[L=$v_{-l_2}$,Lpos=below](2c){-2c}
    \begin{scope}[VertexStyle/.style = {draw=none}]
    \SetVertexLabelIn
    \EA[L=$\cdots$,unit=0.5](2c){3c}
    \SO[L=$\cdots$](3c){-3c}
    \end{scope}
    \EA[L=$v_{l_c}$,Lpos=above,unit=0.5](3c){4c}
    \SO[L=$v_{-l_c}$,Lpos=below](4c){-4c}
    \Edges(0,1c,-1c,0)
    \Edges(0,2c,-2c,0)
    \Edges(0,4c,-4c,0)
\end{tikzpicture}
\caption{Base case of the induction}\label{fig:5}
\end{figure}
\end{proof}
\section{A third proof: Direct Induction}\label{sec:directly}

In this section, we will present a direct proof of \cref{thm:main} without using the geometric interpretations. We will start with the following theorem that evaluates $(-1)^n\chi_G(-1,2)$.

\begin{definition}
In a symmetric graph $G$ with $2n+1$ vertices, a set $I\subseteq\{1,2,\dots,n\}$ is a \emph{independent set} if the vertices $V(I)=\{v_i:i\in I\text{ or }-i\in I\}$ is an independent set in $G$. Let $G-I$ denote the symmetric graph obtained by deleting all vertices in $V(S)$ from $G$. 
\end{definition}

\begin{theorem}\label{thm:altersum}
Given a symmetric graph $G$ with $2n+1$ vertices, we have
\[(-1)^n\chi_G(-1,2)=\sum_{I\subseteq[n]\text{ indep.}}(-1)^{|I|}|\text{Acyc}(G-I)|.\]
\end{theorem}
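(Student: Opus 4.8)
The plan is to reduce \cref{thm:altersum} to the Greene--Zaslavsky identity (\cref{thm:zas}) through a single intermediate polynomial identity, namely
\[\chi_G(k,2)=\sum_{I\subseteq[n]\text{ indep.}}\chi_{G-I}(k,1).\]
I would first prove this for every integer $k\ge 1$, where both sides literally count proper colorings, and then upgrade it to an identity of polynomials. Write $0'$ for the unique symmetric color $0_1$, so that a proper $(k,2)$-coloring $f$ of $G$ uses the palette $\{0,\pm1,\dots,\pm k\}\cup\{0'\}$. To each such $f$ attach the set $I(f):=\{\,i\in[n]:f(v_i)=0'\,\}$ of vertex pairs that receive $0'$; this is well defined since $f(v_i)=0'$ forces $f(v_{-i})=-0'=0'$. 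Because $0'$ is the only color that conflicts with $0'$, the vertex set $V(I(f))$ is independent in $G$, so $I(f)$ is an independent set in the sense of the definition preceding the theorem. (This also handles the degenerate possibility $v_iv_{-i}\in E$: then $\{i\}$ fails to be independent and simply never occurs as $I(f)$.)

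Next I would exhibit, for each fixed independent set $I$, a bijection between the proper $(k,2)$-colorings $f$ of $G$ with $I(f)=I$ and the proper $(k,1)$-colorings of $G-I$. Restricting $f$ to $V(G)\setminus V(I)$ produces a map into $\{0,\pm1,\dots,\pm k\}$ that sends $v_0$ to $0$, satisfies $f(v_i)=-f(v_{-i})$, and is proper on $E(G-I)$ (which is exactly the set of edges of $G$ with neither endpoint in $V(I)$); hence it is a proper $(k,1)$-coloring of the symmetric graph $G-I$, which has $2(n-|I|)+1$ vertices since deleting $v_i$ forces deleting $v_{-i}$ and $v_0$ is never deleted. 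Conversely, any proper $(k,1)$-coloring of $G-I$ extends uniquely by assigning $0'$ to every vertex of $V(I)$, and this extension is proper precisely because $V(I)$ is independent and a $0'$-vertex conflicts only with another $0'$-vertex. Summing over all independent $I$ yields the displayed identity for every $k\ge 1$; since both sides are polynomials in $k$ (\cite[Theorem~6.5]{Gooda_Tuttedichromate_21} for the left side, and the same result applied to each $G-I$ for the right), the identity holds as polynomials, so I may substitute $k=-1$.

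Finally, \cref{thm:zas} applied to each $G-I$ gives $\chi_{G-I}(-1,1)=(-1)^{\,n-|I|}\,|\text{Acyc}(G-I)|$, whence
\begin{align*}
\chi_G(-1,2)&=\sum_{I\subseteq[n]\text{ indep.}}(-1)^{n-|I|}\,|\text{Acyc}(G-I)|\\
&=(-1)^n\sum_{I\subseteq[n]\text{ indep.}}(-1)^{|I|}\,|\text{Acyc}(G-I)|,
\end{align*}
which is the assertion. I do not anticipate a genuine obstacle: essentially all the content sits in the bijection, so the step to be careful with is verifying that deleting the $0'$-colored vertices leaves \emph{exactly} a proper $(k,1)$-coloring of $G-I$ and nothing more, and that the independence hypothesis in the definition is precisely what makes the reverse extension proper. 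A minor bookkeeping point worth stating explicitly is that $G-I$ is again a simple symmetric graph of the claimed size, and that \cref{thm:zas} requires no connectivity hypothesis, so \cref{thm:altersum} holds for all symmetric graphs $G$.
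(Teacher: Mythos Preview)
Your proposal is correct and follows essentially the same approach as the paper: both establish the polynomial identity $\chi_G(k,2)=\sum_{I}\chi_{G-I}(k,1)$ by partitioning proper $(k,2)$-colorings according to the set of vertex pairs receiving the extra color $0'$, and then substitute $k=-1$ and invoke \cref{thm:zas} with the sign $(-1)^{n-|I|}$ coming from the vertex count of $G-I$. Your write-up is in fact more detailed than the paper's, spelling out both directions of the bijection and the passage from integers to polynomials.
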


\begin{proof}
Given a proper $(k,2)$-coloring of $G$ using colors $\{0,\pm1,\dots,\pm k\}\cup\{0'\}$, let $I$ be the independent set of vertices colored by $0'$. Then the number $\chi_G(k,2)$ is equal to the number of tuples $(I,f)$, where $I$ is an independent set of $G$ and $f$ is a proper $k$-coloring of the subgraph $G-I$. This implies that 
\[\chi_G(k,2)=\sum_{I\subseteq[n]\text{ indep.}}\chi_{G-I}(k,1).\]
The theorem then follows from letting $k=-1$ and applying \cref{thm:zas}, which says that $|\text{Acyc(G)}|=(-1)^n\chi_G(-1,1)$.
\end{proof}

\begin{example}
We show an example of \cref{thm:altersum} using the graph $G$ in \cref{fig:1b}. The independent sets of $G$ are $I=\emptyset$, $\{1\}$ and $\{2\}$. Therefore, \cref{thm:altersum} tells us that the evaluation of the bivariate chromatic polynomial $(-1)^n\chi_G(-1,2)$ is equal to the alternating sum in \cref{fig:6}. The number of acyclic orientations of the three subgraphs on the right hand side in \cref{fig:6} are $6,2$ and $1$ respectively (see \cref{fig:2} for all $6$ orientations of the first subgraph), and the theorem holds since $3=6-2-1$.
\begin{figure}[ht!]
\centering
\begin{tikzpicture}
    \SetFancyGraph
    \SetGraphUnit{1}
    \begin{scope}[VertexStyle/.style = {draw=none}]
    \SetVertexLabelIn
    \Vertex[L={$(-1)^n\chi(-1,2)$}]{start}
    \EA[L={$=$},unit=1.7](start){=}
    \end{scope}
    \EA[L=$v_1$,Lpos=left,unit=1.5](=){1}
    \EA[L=$v_0$,Lpos=left](1){0}
    \NO[L=$v_2$,Lpos=above](0){2}
    \SO[L=$v_{-2}$,Lpos=below](0){-2}
    \EA[L=$v_{-1}$,Lpos=right](0){-1}
    \Edges(1,2,-1)
    \Edges(-2,0,2)
    \Edges(1,-2,-1)
    
    \begin{scope}[VertexStyle/.style = {draw=none}]
    \SetVertexLabelIn
    \EA[L={$-$},unit=1.5](-1){+one}
    \end{scope}

    \EA[L=$v_0$,Lpos=left,unit=1.5](+one){0a}
    \SO[L=$v_{-2}$,Lpos=below](0a){-2a}
    \NO[L=$v_{2}$,Lpos=above](0a){2a}
    \Edges(-2a,0a,2a)
    
    \begin{scope}[VertexStyle/.style = {draw=none}]
    \SetVertexLabelIn
    \EA[L={$-$},unit=1.2](0a){+two}
    \end{scope}

    \EA[L=$v_1$,Lpos=above,unit=1.2](+two){1b}
    \EA[L=$v_{0}$,Lpos=above](1b){0b}
    \EA[L=$v_{-1}$,Lpos=above](0b){-1b}
\end{tikzpicture}
\caption{An example of \cref{thm:altersum} using \cref{fig:1b}}\label{fig:6}
\end{figure}
\end{example}

In order to prove the main theorem, what is left is to show that the right hand side of \cref{thm:altersum} satisfies
\begin{equation}\label{eq:acycspecial}
    |\text{Acyc}(G)/\sim|=\sum_{I\subseteq[n]\text{ indep.}}(-1)^{|I|}|\text{Acyc}(G-I)|.
\end{equation}
The strategy is to use induction. We would like to split each term $(-1)^{|I|}|\text{Acyc}(G-I)|$ in the alternating sum into two categories based on whether $i\in I$ or $i\notin I$ for some fixed index $i\in[n]$. If $i\in I$, then the sum reduces to the case of $G-\{i\}$. If $i\notin I$, the sum reduces to a version of $G$ where $i$ is ``frozen''. Namely, $i$ is not allowed to be picked out into the independent set $I$. This motivates the following definition.

\begin{definition}
A \emph{frozen symmetric graph} is a pair $\Gamma=(G,S)$ of symmetric graph $G$ and a set $S\subseteq[n]$. Vertices in $V(S)=\{v_i:i\in S\}\cup\{v_{-i}:i\in S\}$ are called \emph{frozen vertices}; other vertices are called \emph{free vertices}. Two acyclic orientations $\omega$ and $\omega'$ of $G$ differ by an \emph{$S$-flip} if one can reverse all arrows adjacent to a \emph{free} source-sink pair $(v_i,v_{-i})$ where $i\not\in S$ in $\omega$ to obtain $\omega'$. In other words, we are not allowed to flip the frozen vertices. The transitive closure of the $S$-flip operation generates an equivalence relation on the set of all symmetric acyclic orientation $\text{Acyc}(G)$ denoted by $\sim_S$.
\end{definition}

Also, we say $\Gamma=(G,S)$ is \emph{weakly connected} if vertex $v_i$ is connected to $v_{-i}$ in $G$, or connected to a frozen vertex, for all $i\neq 0$. This definition is compatible with the definition of weakly-connectedness for regular symmetric graphs in \cref{def:weaklyconn} (when $S=\emptyset$). We will assume the frozen symmetric graph $\Gamma$ is weakly-connected in the rest of the paper. The following theorem is a generalization of \cref{eq:acycspecial}.

\begin{theorem}\label{thm:frozen}
Given a frozen symmetric graph $\Gamma=(G,S)$, we have
\[
    |\text{Acyc}(G)/{\sim_S}|=\sum_{\substack{I\subseteq [n]\text{ indep.}\\I\cap S=\emptyset}}(-1)^{|I|}|\text{Acyc}(G-I)|.
\]
\end{theorem}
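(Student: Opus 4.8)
The plan is to prove \cref{thm:frozen} by induction on the number of free vertex-pairs, i.e.\ on $n - |S|$. In the base case $S = [n]$, every vertex is frozen, no $S$-flips are available, so the left-hand side is simply $|\text{Acyc}(G)|$; on the right-hand side the only admissible index set is $I = \emptyset$, so the sum is also $|\text{Acyc}(G-\emptyset)| = |\text{Acyc}(G)|$, and the identity holds. For the inductive step, I would fix a free index $i \in [n] \setminus S$ and split the right-hand side according to whether $i \in I$ or $i \notin I$:
\[
\sum_{\substack{I\text{ indep.}\\ I\cap S=\emptyset}}(-1)^{|I|}|\text{Acyc}(G-I)|
= \sum_{\substack{I\ni i\text{ indep.}\\ I\cap S=\emptyset}}(-1)^{|I|}|\text{Acyc}(G-I)|
+ \sum_{\substack{I\not\ni i\text{ indep.}\\ I\cap S=\emptyset}}(-1)^{|I|}|\text{Acyc}(G-I)|.
\]
The first sum, after deleting $i$ and reindexing, is $-\sum_{J\text{ indep.},\, J\cap S=\emptyset}(-1)^{|J|}|\text{Acyc}((G-\{i\})-J)|$, where $J$ ranges over independent sets of $G - \{i\}$ disjoint from $S$; by the inductive hypothesis applied to the frozen graph $(G-\{i\}, S)$ (which has one fewer vertex-pair, and which one checks is still weakly connected — this needs the weak-connectedness hypothesis), this equals $-|\text{Acyc}(G-\{i\})/{\sim_S}|$. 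The second sum is exactly the right-hand side of \cref{thm:frozen} for the frozen graph $(G, S \cup \{i\})$, which has one fewer free vertex-pair, so by induction it equals $|\text{Acyc}(G)/{\sim_{S\cup\{i\}}}|$.

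Thus the inductive step reduces to the purely combinatorial identity
\[
|\text{Acyc}(G)/{\sim_S}| = |\text{Acyc}(G)/{\sim_{S\cup\{i\}}}| - |\text{Acyc}(G-\{i\})/{\sim_S}|,
\]
relating the flip-equivalence classes when $i$ becomes frozen versus free. To prove this I would analyze the natural surjection $\text{Acyc}(G)/{\sim_{S\cup\{i\}}} \twoheadrightarrow \text{Acyc}(G)/{\sim_S}$: two $\sim_{S\cup\{i\}}$-classes merge under $\sim_S$ precisely when they are joined by a chain of flips at the pair $(v_i, v_{-i})$ (interspersed with flips at other free pairs). I expect that within a single $\sim_S$-class the $\sim_{S\cup\{i\}}$-classes are linearly ordered by "how many times $v_i$ can be flipped from source toward sink" — concretely, in each $\sim_S$-class one should be able to normalize to a representative where $v_i$ is a source (resp.\ sink) relative to the remaining orientation, and the extra count $|\text{Acyc}(G)/{\sim_{S\cup\{i\}}}| - |\text{Acyc}(G)/{\sim_S}|$ should biject with those orientations where $v_i$ is neither a source nor a sink "pinned at the boundary," i.e.\ with acyclic orientations of $G - \{i\}$ up to $\sim_S$. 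Making this bijection precise is the crux: given a $\sim_{S\cup\{i\}}$-class that is \emph{not} flip-minimal in its $\sim_S$-class (i.e.\ $v_i$ is not yet in its extreme position), restrict the orientation to $G - \{i\}$; conversely, given an acyclic orientation of $G-\{i\}$ up to $\sim_S$, reinsert $v_i$ in a canonical non-extreme position.

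The main obstacle will be precisely this last bijection — controlling how flip-equivalence interacts with freezing a single vertex-pair. Unlike the ordinary-graph case (where Stanley's reciprocity has a clean inductive structure), here flips at $(v_i,v_{-i})$ do not commute freely with flips at other pairs, so one must argue that the "position of $v_i$ along its flip-orbit" is a well-defined invariant of the $\sim_S$-class. I anticipate handling this by passing to the geometric picture from \cref{sec:graphical} as a sanity check — chambers of $\cB_\text{tor}(G)$ with the hyperplane pair $\{x_i = 0\},\{2x_i = 1\}$ deleted versus kept — but for a genuinely geometry-free proof I would instead give a direct combinatorial normal form: in each $\sim_S$-equivalence class, repeatedly flip $(v_i,v_{-i})$ (when it is a free source-sink pair) until it is no longer flippable, show this terminates at a well-defined set of "boundary" orientations, and count the intermediate non-boundary states via the deletion $G - \{i\}$. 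An auxiliary lemma I would isolate first: in any symmetric acyclic orientation, $(v_i,v_{-i})$ is a free source-sink pair in \emph{some} member of its $\sim_{S\cup\{i\}}$-class unless $v_i$ is "trapped" by frozen neighbors, which is where the weak-connectedness of $\Gamma$ is used to rule out degenerate components.
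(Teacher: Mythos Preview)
Your overall inductive strategy matches the paper's, but there is a genuine error in the reindexing of the ``$i\in I$'' sum that propagates into a false key identity. If $I$ is independent in $G$ and $i\in I$, then $J:=I\setminus\{i\}$ is not merely an independent set of $G-\{i\}$ disjoint from $S$: since $V(I)$ is independent and contains $v_i,v_{-i}$, no index in $N(i)$ can lie in $J$. Thus the correct range for $J$ is
\[
J\subseteq[n]\setminus\{i\}\ \text{independent},\qquad J\cap\bigl(S\cup N(i)\bigr)=\emptyset,
\]
and the first sum becomes, by induction, $-\,|\text{Acyc}(G-i)/{\sim_{S\cup N(i)}}|$, not $-\,|\text{Acyc}(G-i)/{\sim_S}|$. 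Consequently the identity you actually need is
\[
|\text{Acyc}(G)/{\sim_S}|=|\text{Acyc}(G)/{\sim_{S\cup\{i\}}}|-|\text{Acyc}(G-i)/{\sim_{S\cup N(i)}}|,
\]
which is exactly the paper's \cref{lm:frozen}. Your stated identity (with $\sim_S$ on the deletion) is false already for the graph in \cref{fig:1b} with $S=\emptyset$ and $i=1$: one computes $|\text{Acyc}(G)/{\sim_\emptyset}|=3$, $|\text{Acyc}(G)/{\sim_{\{1\}}}|=5$, but $|\text{Acyc}(G-1)/{\sim_\emptyset}|=1$, and $3\neq 5-1$.

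This correction also resolves your worry about weak connectedness of the deleted graph: $(G-i,S)$ need not be weakly connected, but $(G-i,S\cup N(i))$ always is, since any $v_j$ that reached $v_{-j}$ through $v_{\pm i}$ is now adjacent to a newly frozen vertex. Finally, your sketch for the key lemma (normal forms via repeated flips at $(v_i,v_{-i})$) is aimed at the wrong target; for the correct lemma the paper builds a real-valued height function $h_\omega$ on vertices (\cref{prop:height}) and shows that the $\sim_{S\cup\{i\}}$-classes inside a fixed $\sim_S$-class are parametrised by the finitely many values $h_\omega(v_i)$ can take, with the deletion accounting for all but the extremal value. This height-function argument is the substantive missing ingredient.
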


The proof of \cref{thm:frozen} relies on the following definitions and a key lemma.

\begin{definition}
Given a frozen symmetric graph $\Gamma=(G,S)$ and index $i\not\in S$ where $v_i,v_{-i}$ are not adjacent, denote the graph $\Gamma_i:=(G,S\cup\{i\})$ called the \emph{freezing} of $G$ at $i$. Let $G-i$ be the subgraph  of $G$ obtained by deleting vertices $v_i$ and $v_{-i}$, and let $N(i)\subseteq[n]$ denote the set of indices $j$ where $v_j$ is a neighbour of $v_i$ or $v_{-i}$. Denote the graph $\Gamma-i:=(G-i,S\cup N(i))$ called the \emph{deletion} of $G$ at $i$.
\end{definition}

\begin{remark}
The freezing $\Gamma_i$ is obtained by simply ``freezing'' the vertices $v_i$ and $v_{-i}$ in $\Gamma$. To obtain the deletion $\Gamma-i$, one deletes vertices $v_i$ and $v_{-i}$, and then ``freezes'' all neighbours of them.
\end{remark}

\begin{example}
\cref{fig:7} shows an example of freezing and deletion of a frozen symmetric graph $\Gamma=(G,\{2,4,5\})$ at index $4$. Frozen vertices are labeled by squares.
\begin{figure}[ht!]
\centering
\begin{subfigure}[c]{.3\linewidth}
\centering
\begin{tikzpicture}
    \SetFancyGraph
    \SetGraphUnit{0.9}
    \Vertex[L=$v_0$,Lpos=right]{0}
    \NO[L=$v_3$,Lpos=above](0){3}
    \WE[L=$v_2$,Lpos=above](3){2}
    \WE[L=$v_1$,Lpos=above](2){1}
    \EA[L=$v_4$,Lpos=above](3){4}
    \EA[L=$v_5$,Lpos=above](4){5}
    
    \SO[L=$v_{-3}$,Lpos=below](0){-3}
    \WE[L=$v_{-2}$,Lpos=below](-3){-2}
    \WE[L=$v_{-1}$,Lpos=below](-2){-1}
    \EA[L=$v_{-4}$,Lpos=below](-3){-4}
    \EA[L=$v_{-5}$,Lpos=below](-4){-5}
    
    \Edges(0,3,4,5,0)
    \Edges(0,-3,-4,-5,0)
    \Edges(1,0,-1)
    \Edges(2,0,-2)
    
    \begin{scope}
    \tikzset{VertexStyle/.style = {
        shape = rectangle,
        fill= none,
        inner sep = 0pt,
        outer sep = 0pt,
        minimum size = 10pt,
    draw}}
    \NO[unit=0,NoLabel](1){x}
    \NO[unit=0,NoLabel](3){y}
    % \NO[unit=0,NoLabel](4){z}
    \NO[unit=0,NoLabel](-1){-x}
    \NO[unit=0,NoLabel](-3){-y}
    % \NO[unit=0,NoLabel](-4){-z}
    \end{scope}
\end{tikzpicture}
\caption{Frozen Sym. Graph}
\end{subfigure}
\begin{subfigure}[c]{.3\linewidth}
\centering
\begin{tikzpicture}
    \SetFancyGraph
    \SetGraphUnit{0.9}
    \Vertex[L=$v_0$,Lpos=right]{0}
    \NO[L=$v_3$,Lpos=above](0){3}
    \WE[L=$v_2$,Lpos=above](3){2}
    \WE[L=$v_1$,Lpos=above](2){1}
    \EA[L=$v_4$,Lpos=above](3){4}
    \EA[L=$v_5$,Lpos=above](4){5}
    
    \SO[L=$v_{-3}$,Lpos=below](0){-3}
    \WE[L=$v_{-2}$,Lpos=below](-3){-2}
    \WE[L=$v_{-1}$,Lpos=below](-2){-1}
    \EA[L=$v_{-4}$,Lpos=below](-3){-4}
    \EA[L=$v_{-5}$,Lpos=below](-4){-5}
    
    \Edges(0,3,4,5,0)
    \Edges(0,-3,-4,-5,0)
    \Edges(1,0,-1)
    \Edges(2,0,-2)
    
    \begin{scope}
    \tikzset{VertexStyle/.style = {
        shape = rectangle,
        fill= none,
        inner sep = 0pt,
        outer sep = 0pt,
        minimum size = 10pt,
    draw}}
    \NO[unit=0,NoLabel](1){x}
    \NO[unit=0,NoLabel](3){y}
    \NO[unit=0,NoLabel](4){z}
    \NO[unit=0,NoLabel](-1){-x}
    \NO[unit=0,NoLabel](-3){-y}
    \NO[unit=0,NoLabel](-4){-z}
    \end{scope}
\end{tikzpicture}
\caption{Freezing at $4$}
\end{subfigure}
\begin{subfigure}[c]{.3\linewidth}
\centering
\begin{tikzpicture}
    \SetFancyGraph
    \SetGraphUnit{0.9}
    \Vertex[L=$v_0$,Lpos=right]{0}
    \NO[L=$v_3$,Lpos=above](0){3}
    \WE[L=$v_2$,Lpos=above](3){2}
    \WE[L=$v_1$,Lpos=above](2){1}
    % \EA[L=$v_4$,Lpos=above](3){4}
    \EA[L=$v_5$,Lpos=above,unit=1.8](3){5}
    
    \SO[L=$v_{-3}$,Lpos=below](0){-3}
    \WE[L=$v_{-2}$,Lpos=below](-3){-2}
    \WE[L=$v_{-1}$,Lpos=below](-2){-1}
    % \EA[L=$v_{-4}$,Lpos=below](-3){-4}
    \EA[L=$v_{-5}$,Lpos=below,unit=1.8](-3){-5}
    
    \Edges(3,0,5)
    \Edges(-3,0,-5)
    \Edges(1,0,-1)
    \Edges(2,0,-2)
    
    \begin{scope}
    \tikzset{VertexStyle/.style = {
        shape = rectangle,
        fill= none,
        inner sep = 0pt,
        outer sep = 0pt,
        minimum size = 10pt,
    draw}}
    \NO[unit=0,NoLabel](1){x}
    \NO[unit=0,NoLabel](3){y}
    \NO[unit=0,NoLabel](5){z}
    \NO[unit=0,NoLabel](-1){-x}
    \NO[unit=0,NoLabel](-3){-y}
    \NO[unit=0,NoLabel](-5){-z}
    \end{scope}
\end{tikzpicture}
\caption{Deletion at $4$}
\end{subfigure}
\caption{Example of freezing and deletion of a frozen symmetric graph. Frozen vertices labeled by squares.}\label{fig:7}
\end{figure}
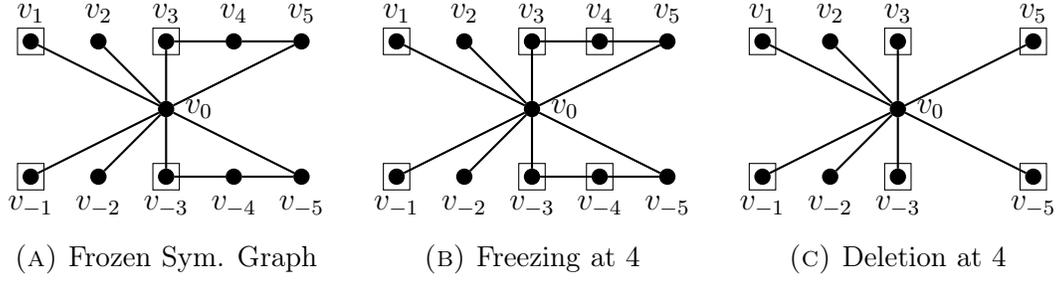
\end{example}

Now we present the key lemma that is crucial to the proof of \cref{thm:frozen}.

\begin{lemma}[Freezing-Deletion of acyclic orientations]\label{lm:frozen}
Given a frozen symmetric graph $\Gamma=(G,S)$ and $i\not\in S$ where $v_{i},v_{-i}$ are not adjacent, denote $\Gamma_i=(G,S\cup\{i\})$ and $\Gamma-i=(G-i,S\cup N(i))$. We have
\[|\text{Acyc}(G)/{\sim_S}|=|\text{Acyc}(G)/{\sim_{S\cup\{i\}}}|-|\text{Acyc}(G-i)/{\sim_{S\cup N(i)}}|.\]
\end{lemma}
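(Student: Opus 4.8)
The plan is to use that $\sim_{S\cup\{i\}}$ refines $\sim_S$, since every $(S\cup\{i\})$-flip is also an $S$-flip. There is then a natural surjection $\text{Acyc}(G)/{\sim_{S\cup\{i\}}}\twoheadrightarrow\text{Acyc}(G)/{\sim_S}$, and the lemma is equivalent to
\[|\text{Acyc}(G)/{\sim_{S\cup\{i\}}}|=|\text{Acyc}(G)/{\sim_S}|+|\text{Acyc}(G-i)/{\sim_{S\cup N(i)}}|.\]
Call an $(S\cup\{i\})$-flip class \emph{sourced} if some orientation in it has $v_i$ as a source (equivalently $v_{-i}$ as a sink). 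I would prove that the sourced classes are in bijection with $\text{Acyc}(G-i)/{\sim_{S\cup N(i)}}$ and the non-sourced classes are in bijection with $\text{Acyc}(G)/{\sim_S}$; summing the two counts gives the identity.

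For the sourced classes, deleting $v_i$ and $v_{-i}$ together with their incident edges gives a bijection $\rho$ from the set $\mathcal{A}^{\mathrm{src}}$ of acyclic orientations of $G$ in which $v_i$ is a source onto $\text{Acyc}(G-i)$, with inverse reattaching $v_i$ as a source and $v_{-i}$ as a sink (this keeps the orientation symmetric and acyclic). The key step is the claim that for $\omega_1,\omega_2\in\mathcal{A}^{\mathrm{src}}$ one has $\omega_1\sim_{S\cup\{i\}}\omega_2$ in $G$ if and only if $\rho(\omega_1)\sim_{S\cup N(i)}\rho(\omega_2)$ in $G-i$; granting this, sending a sourced class to the $\sim_{S\cup N(i)}$-class of the $\rho$-image of any of its $v_i$-source orientations is the desired bijection. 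The direction ``$\Leftarrow$'' is easy: an $(S\cup N(i))$-flip of $G-i$ is performed at a pair $(v_j,v_{-j})$ with $j\notin S\cup N(i)$, so $v_j,v_{-j}$ are adjacent to neither $v_i$ nor $v_{-i}$, hence the flip lifts verbatim to $G$ and keeps $v_i$ a source.

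The direction ``$\Rightarrow$'' is the main obstacle. An $(S\cup\{i\})$-flip path in $G$ from $\omega_1$ to $\omega_2$ may leave $\mathcal{A}^{\mathrm{src}}$; precisely, only a flip at some $j\in N(i)$ can change whether $v_i$ is a source. The structural fact I would lean on is: if $v_i$ is a source and one flips at $j\in N(i)$, the flip creates a single new in-edge at $v_i$ (from the unique one of $v_j,v_{-j}$ adjacent to $v_i$), and the only flip that can afterwards remove that in-edge is again a flip at $j$. So every ``excursion'' of $v_i$ away from being a source is opened and closed by a matched pair of $N(i)$-flips at a common index, and I would normalize the path by cancelling such pairs until $v_i$ is a source throughout — at which point the path uses no $N(i)$-flip and projects to an $(S\cup N(i))$-flip path in $G-i$, yielding $\rho(\omega_1)\sim_{S\cup N(i)}\rho(\omega_2)$. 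Making this rigorous, in particular when an excursion toggles several edges at $v_i$, is the delicate part.

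For the non-sourced classes, I would send each to the $\sim_S$-class containing it. If $v_i$ is a source in no orientation of a $\sim_S$-class $C$, then it is never a sink there either — an $i$-flip (an $S$-flip, since $i\notin S$ and $v_i,v_{-i}$ are non-adjacent) would turn a sink into a source — so $v_i$ is interior throughout $C$, no $i$-flip occurs in $C$, and $C$ is already a single non-sourced class. When $C$ does contain a $v_i$-source orientation, one must show $C$ splits into exactly one non-sourced and several sourced subclasses. Uniqueness of the non-sourced subclass would follow by analyzing a minimal connecting $S$-flip path between two non-sourced subclasses: such a path must enter and leave them via $i$-flips at $v_i$-sink orientations, and since the $i$-flip $\phi$ fixes the $G-i$-restriction it carries $\sim_{S\cup\{i\}}$-equivalent $v_i$-source orientations to $\sim_{S\cup\{i\}}$-equivalent $v_i$-sink orientations (by the key claim applied on both sides), forcing the two subclasses to agree. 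Exhibiting a non-sourced subclass inside every such $C$ is the remaining delicate point and, like the ``$\Rightarrow$'' step, comes down to controlling how $N(i)$-flips shuttle $v_i$ between source, sink, and interior. As a sanity check, this entire statement is the combinatorial shadow of adjoining the wall $\{x_i=1/2\}$ to the graphical toric arrangement and invoking \cref{thm:delres}.
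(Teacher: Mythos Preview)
Your decomposition is exactly the one the paper's proof rests on: within each $\sim_S$-class $\cC$ it shows there are $d$ many $\sim_{S\cup\{i\}}$-subclasses and $d-1$ many $\sim_{S\cup N(i)}$-classes of $v_i$-source orientations, and the latter correspond precisely to your ``sourced'' subclasses while the unique remaining one is your ``non-sourced'' subclass. So the plan is sound, and the two counts you want are correct.

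The difference is in how the gaps you flag are closed. The paper does \emph{not} attempt a direct cancellation of $N(i)$-flip ``excursions.'' Instead it builds a family of height functions $h_\omega:V(G)\to\R$ (\cref{prop:height}) with the property that an $S$-flip at $(v_j,v_{-j})$ changes $h_\omega$ only at $v_{\pm j}$, by $\pm 1$. From this it proves (\cref{prop:path}) that any two equivalent orientations are joined by a flip path that touches only vertices where their heights differ. Your ``$\Rightarrow$'' direction then falls out immediately: if $\omega_1,\omega_2\in\mathcal A^{\mathrm{src}}$ lie in the same $\sim_{S\cup\{i\}}$-class, then $h_{\omega_1}(v_i)=h_{\omega_2}(v_i)$, hence $h_{\omega_1}=h_{\omega_2}$ on all of $\{v_{\pm i}\}\cup N(i)$, and \cref{prop:path} produces a path avoiding all of those flips. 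Likewise, existence and uniqueness of the non-sourced subclass come from the height picture: the $\sim_{S\cup\{i\}}$-subclasses of $\cC$ are exactly the level sets of $h_\omega(v_i)$, and the one with the minimal value $N_1$ is precisely the subclass containing no $v_i$-source (if it did, the $i$-flip would produce a smaller height).

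Your proposed substitute for this machinery has a real obstruction. The observation that an in-edge at $v_i$ created by a flip at $j\in N(i)$ can only be removed by another flip at $j$ is correct, but it does not yield a cancellation scheme: flips at distinct $j_1,j_2\in N(i)$ need not commute (whether $v_{j_2}$ is a source depends on prior flips), so nested or interleaved excursions cannot in general be paired off locally. Similarly, for existence of a non-sourced subclass you need some monotone quantity to rule out an infinite chain of sourced subclasses linked by $i$-flips; finiteness alone only gives a cycle, not a terminal class. The height function is precisely such a potential, and something of that nature seems unavoidable here.
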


We will postpone the proof for \cref{lm:frozen} to the last subsection. Before that, we will show how \cref{lm:frozen} implies \cref{thm:frozen}, and thus the main result.

\begin{proof}[Proof of \cref{thm:frozen}]
We prove by induction on $(n,|S|)$ where the second term $|S|$ goes in reverse order. In the base case, $S=\{i\in [n]:v_{i}\text{ and }v_{-i}\text{ are not adjacent}\}$, and both sides are equal to $|\text{Acyc}(G)|$. For the induction step, pick a random index $i\not\in S$ and $v_{i},v_{-i}$ not adjacent. We split the alternating sum on the right hand side into two cases: $i\in I$ or $i\notin I$.
\begin{align*}
    &\sum_{\substack{I\subseteq [n]\text{ indep.}\\I\cap S=\emptyset\\i\notin I}}(-1)^{|I|}|\text{Acyc}(G-I)| \quad+\quad \sum_{\substack{I\subseteq [n]\text{ indep.}\\I\cap S=\emptyset\\i\in I}}(-1)^{|I|}|\text{Acyc}(G-I)|\\
    =&\sum_{\substack{I\subseteq [n]\text{ indep.}\\I\cap (S\cup\{i\})=\emptyset}}(-1)^{|I|}|\text{Acyc}(G-I)| \quad-\quad \sum_{\substack{I'\subseteq [n]-i\text{ indep.}\\I'\cap S=\emptyset\\I'\cap N(i) =\emptyset}}(-1)^{|I'|}|\text{Acyc}(G-i-I')|.
\end{align*}
When $i\notin I$, it reduces to the case of $\Gamma_i$, and when $i\in I$, it reduces to the case $\Gamma-i$. The induction step then follows from \cref{lm:frozen}.
\end{proof}

Specifically, when $S=\emptyset$ we get the following corollary.

\begin{corollary}\label{cor:frozen}
Given a symmetric graph $G$, we have
\[
|\text{Acyc}(G)/{\sim}|=\sum_{I\subseteq [n]\text{ indep.}}(-1)^{|I|}|\text{Acyc}(G-I)|.
\]
\end{corollary}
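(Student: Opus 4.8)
The plan is to derive this corollary directly as the $S=\emptyset$ specialization of \cref{thm:frozen}, which I am free to invoke. Since the corollary is literally the instance of the theorem with no frozen vertices, the entire proof reduces to checking that the two objects appearing in \cref{cor:frozen} genuinely coincide with the $S=\emptyset$ versions of the objects in \cref{thm:frozen}. There are exactly two such identifications to verify, plus one hypothesis to track.

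First I would verify that the equivalence relation $\sim$ used in the corollary coincides with $\sim_\emptyset$. By definition, an $S$-flip reverses all arrows adjacent to a \emph{free} source-sink pair $(v_i,v_{-i})$ with $i\notin S$. When $S=\emptyset$ there are no frozen vertices, so every source-sink pair is free, and an $\emptyset$-flip is precisely a flip in the original sense of the main definition. Passing to transitive closures, $\sim_\emptyset$ and $\sim$ are the same equivalence relation on $\text{Acyc}(G)$, whence $|\text{Acyc}(G)/{\sim_\emptyset}|=|\text{Acyc}(G)/{\sim}|$. Second, I would observe that the summation index simplifies: in \cref{thm:frozen} the sum runs over independent sets $I$ subject to $I\cap S=\emptyset$, and when $S=\emptyset$ this constraint is vacuous, so the sum runs over all independent sets $I\subseteq[n]$. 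Combining the two observations, the right-hand side of \cref{thm:frozen} at $S=\emptyset$ is exactly $\sum_{I\subseteq[n]\text{ indep.}}(-1)^{|I|}|\text{Acyc}(G-I)|$, while the left-hand side is $|\text{Acyc}(G)/{\sim}|$, which is the claimed identity.

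The one point requiring care is the weak-connectedness hypothesis. \cref{thm:frozen} carries the standing assumption that the frozen symmetric graph $\Gamma=(G,S)$ is weakly connected; by the compatibility remark following the definition of weak-connectedness for frozen graphs, this reduces, when $S=\emptyset$, to $G$ being weakly connected in the sense of \cref{def:weaklyconn}. Thus the corollary inherits precisely the weak-connectedness hypothesis already assumed throughout. I expect no genuine obstacle here: the mathematical content lives entirely in \cref{thm:frozen}, whose proof rests in turn on the freezing-deletion recurrence of \cref{lm:frozen}, and the corollary is obtained purely by setting $S=\emptyset$ and recording the two bookkeeping identifications above.
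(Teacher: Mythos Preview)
Your proposal is correct and follows exactly the paper's approach: the corollary is simply the $S=\emptyset$ case of \cref{thm:frozen}, and you have carefully spelled out the (trivial) identifications that the paper leaves implicit. There is nothing to add.
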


\cref{thm:main} then follows from \cref{thm:altersum} and \cref{cor:frozen}.
\subsection{Proof of \cref{lm:frozen}}

For a frozen symmetric graph $\Gamma=(G,S)$, consider a graph $\cG(\Gamma)$ whose vertex set is the set of all possible acyclic orientations of $G$, and there is an edge between two orientations $\omega$ and $\omega'$ of $G$ if they differ by an $S$-flip. Then the number $|\text{Acyc(G)}/\sim_S|$ is the number of connected components of $\cG(\Gamma)$.

Now for $\Gamma=(G,S)$ and $j\not\in S$, $\cG(\Gamma_j)$ has the same vertex set as $\cG(\Gamma)$, so we can view $\cG(\Gamma_j)$ as an induced subgraph of $\cG(\Gamma)$ with the same vertex set and fewer edges. For $\cG(\Gamma-j)$, we can add $v_j$ as a source and $v_{-j}$ as a sink to each vertex of $\cG(\Gamma-j)$. Then we can identify $\cG(\Gamma-j)$ as a induced subgraph of $\cG(\Gamma)$ in this case as well, but with both fewer vertices and fewer edges. Denote $\#\text{Comp}(\cG)$ as the number of connected components in a graph $\cG$. Our goal is to show the following
\[\#\text{Comp}(\cG(\Gamma))=\#\text{Comp}(\cG(\Gamma_j))-\#\text{Comp}(\cG(\Gamma-j)).\]

Now since $\cG(\Gamma_j)$ and $\cG(\Gamma-j)$ are both induced subgraphs of $\cG(\Gamma)$, we can restrict our attention to a single connected component $\cC\subseteq \cG(\Gamma)$. We only need to show that
\begin{equation}\label{eq:goal}
    \#\text{Comp}(\cG(\Gamma_j)\cap \cC)=\#\text{Comp}(\cG(\Gamma-j)\cap \cC)+1.
\end{equation}

We would like to define some ``weight function'' for each orientation $\omega\in\cC$, such that it satisfies some nice properties.

\begin{prop}\label{prop:height}
There exists a family of \emph{height functions} $\{h_\omega:V(G)\to\R\}_{\omega\in\cC}$ that satisfies the following properties:
\begin{enumerate}
    \item(Frozen zeros) For any frozen vertex $v\in V(S)$, $h_\omega(v)=0$.
    \item(Symmetric) $h_\omega(v_i)+h_\omega(v_{-i})=0$ for any $i\in [n]$.
    \item(Compatible with orientation) If $u\to v$ in orientation $\omega$, then $0\leq h_\omega(u)-h_\omega(v)<1$, and $h_\omega(u)=h_\omega(v)$ iff they are both zeros.
    \item(Compatible with flips) For any adjacent $\omega, \omega'\in \cC$ that differ by flipping $(v_i,v_{-i})$ from a source/sink pair to a sink/source pair in $\omega$, we have 
    \[h_{\omega}(v)-h_{\omega'}(v)=\begin{cases}
    +1, &\text{if } v=v_i,\\
    -1, &\text{if } v=v_{-i},\\
    0, &\text{otherwise}.
    \end{cases}\]
    \item(Integral) For any $\omega, \omega'\in \cC$ and vertex $v$, $h_{\omega}(v)-h_{\omega'}(v)$ is an integer.
    \item(Uniqueness) If $h_\omega=h_{\omega'}$, then $\omega=\omega'$.
\end{enumerate}
\end{prop}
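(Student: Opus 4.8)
The plan is to construct the family $\{h_\omega\}$ by fixing the heights on one orientation and transporting them across flips. Pick a base orientation $\omega_0\in\cC$, write $P:=V(S)\cup\{v_0\}$, and let $\iota$ be the mirror involution $v_a\mapsto v_{-a}$. Let $P^*$ be the set of vertices that, in $\omega_0$, both reach and are reached from $P$ along directed paths; then $P^*\supseteq P$ is $\iota$-invariant, and a short argument (any directed path between two vertices of $P^*$ stays inside $P^*$) shows that contracting $P^*$ to one self-mirror vertex $*$ and deleting loops turns $\omega_0$ into an acyclic digraph $D$. Choose a linear extension $\hat r$ of $D$ and set $r'(v):=\hat r(v)-\hat r(\iota v)$, where a vertex of $P^*$ is read as $*$; then $r'$ is a rank function of $D$ with $r'(*)=0$, $r'(\iota v)=-r'(v)$, and $r'(v)\neq 0$ for $v\notin P^*$. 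Finally put $h_{\omega_0}(v):=r'(v)/C$ for a fixed $C>\max r'-\min r'$. Then \emph{Frozen zeros} and \emph{Symmetric} are immediate, and \emph{Compatible with orientation} holds because along each oriented edge $r'$ strictly decreases unless both endpoints lie in $P^*$ (the ``both zeros'' case), while dividing by $C$ keeps all gaps below $1$.

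For general $\omega\in\cC$, choose a flip path $\omega_0=\eta_0\sim\eta_1\sim\cdots\sim\eta_m=\omega$ and define $h_\omega$ by applying the \emph{Compatible with flips} rule at each step. Because an $S$-flip acts on a free pair by an integral shift supported there, the properties \emph{Frozen zeros}, \emph{Symmetric}, and \emph{Integral} carry over automatically. Two things still need proof. First, independence of the chosen path: I would reduce this, via a spanning tree of the graph $\cC$, to checking the rule across each non-tree flip, which in turn rests on the fact that flips at two disjoint free pairs commute. Second — the technical heart — is that \emph{Compatible with orientation} survives a single flip.

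For that, a flip sending $v_i$ from a source to a sink turns each relation $v_i\to u$ with $0\le h(v_i)-h(u)<1$ into $u\to v_i$ with $h_{\omega'}(u)-h_{\omega'}(v_i)=1-\bigl(h(v_i)-h(u)\bigr)$, which lies in $[0,1)$ unless $h(v_i)=h(u)$, and \emph{Compatible with orientation} forces that last equality to mean $h(v_i)=h(u)=0$. So everything reduces to the claim that \emph{no free source or sink of an orientation in $\cC$ has height $0$}. This is true for $\omega_0$, whose zero set is exactly $P^*$ and whose free vertices of $P^*$ are neither sources nor sinks of $\omega_0$; the remaining work is to show the claim persists under a flip, by examining how a flip changes the edges and the zero-set membership around the flipped pair, together with the observation that the zero set of every $h_\omega$ is contained in $P^*$ (so a free vertex can have integer height only if it lies in $P^*$). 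Granting (1)--(5), \emph{Uniqueness} is the last step: $h_\omega$ determines $\omega$ on every edge with endpoints of distinct height, and on the remaining edges, lying inside the common zero set $Z\subseteq P^*$, the two orientations must still agree, since no flip along a connecting path can change the orientation of $P^*$ without also moving a height inside it. I expect preserving the ``no free source/sink at height $0$'' condition under flips to be the main obstacle; the base construction, the spanning-tree bookkeeping, and uniqueness should be routine.
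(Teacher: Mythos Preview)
Your construction is genuinely different from the paper's. The paper never chooses a base orientation; instead it averages the edge-indicator $f_\omega(\vec e)\in\{0,1\}$ over all $\omega\in\cC$ to get a fractional weight $f_\cC(\vec e)\in[0,1]$, and then defines $h_\omega$ as the unique potential with $h_\omega(u)-h_\omega(v)=[\vec e\in\omega]-f_\cC(\vec e)$ along each edge $\vec e=(u\to v)$. Well-definedness comes from the fact that the circulation of $\omega$ around any ``closed cycle'' (a loop, or a path between two frozen vertices) is invariant under flips, so the defining cochain is exact. The equality case in property~(3) is then exactly $f_\cC(\vec e)=1$, meaning the edge is oriented the same way in \emph{every} $\omega'\in\cC$; the paper argues its endpoints are therefore effectively frozen and get height~$0$. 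Your base-plus-transport approach is more elementary and avoids the averaging trick, at the cost of having to prove two invariance statements by hand.

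There is a real gap in your path-independence step. Reducing to a spanning tree of $\cG(\Gamma)$ is fine, but the fundamental cycles you must kill are \emph{not} in general $4$-cycles coming from commuting disjoint flips; the flip graph of acyclic orientations can have long cycles not generated by commutation squares, so ``flips at two disjoint free pairs commute'' does not close the argument. What actually works is essentially the circulation idea hidden in the paper's construction: for each free $v_i$, pick a path $P$ in $G$ from $v_i$ to a frozen vertex (or to $v_{-i}$) using weak connectedness, and track $\psi_\omega:=\#\{\text{forward edges of }P\text{ in }\omega\}$. A flip at any interior vertex of $P$ leaves $\psi$ unchanged, while a flip at $v_i$ changes $\psi$ by exactly $\pm1$ with the correct sign; hence along any closed walk in $\cG(\Gamma)$ the signed flip count of $v_i$ is zero, and your transported heights are well defined. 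With that fixed, your ``no free source or sink at height $0$'' claim is correct and can be proved by the invariant you hint at: every edge with both endpoints in $P^*$ keeps its $\omega_0$-orientation throughout $\cC$ (neither endpoint is ever flipped, by induction on the flip sequence), so each free vertex of $P^*$ permanently retains an in-edge and an out-edge inside $P^*$ and is never a source or sink. Once that invariant is in place, your verification of (3) after a flip and your uniqueness argument go through.
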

\begin{proof}
Define $C\subset G$ to be a closed cycle of $\Gamma=(G,S)$ if $C$ is a (directed) loop or $C$ is a (directed) path that starts and ends at two frozen vertices. For any closed cycle $C\subseteq G$ and an orientation $\omega$ of $G$, define the \emph{circulation} of $\omega$ around $C$ as $c_\omega(C)=|C_\omega^+|-|C_\omega^-|$, where $C_\omega^+$ is the set of forward edges in $C$ and $C_\omega^-$ is the set of backward edges in $C$. Note that flipping at a source-sink pair will not change the circulation of any closed cycle. Therefore, $c=c_\omega$ is fixed across all $\omega\in\cC$.

To proceed with the proof, we need an extra ingredient: a real-valued function $f_\cC$ on the set of directed edges of $G$ (in both directions, denoted as $\vec{E}(G)$). To define $f_\cC$, we first define $f_\omega:\vec{E}(G)\to \R$ for any $\omega\in\cC$ as follows:
\[f_\omega(\vec{e})=\begin{cases}
+1, &\text{if }\vec{e}\in\omega,\\
0, &\text{if }\vec{e}\not\in\omega.
\end{cases}\]
It is not hard to check that $f_\omega$ satisfies the following four properties:
\begin{enumerate}[label=(\alph*)]
    \item $0\leq f_\omega(\vec{e})\leq 1$;
    \item $f_\omega(\vec{e})+f_\omega(-\vec{e})=1$;
    \item $f_\omega(v_i\to v_k)=f_\omega(v_{-k}\to v_{-i})$;
    \item For any closed cycle $C\subset G$, $\sum_{\vec{e}\in C}f_\omega(\vec{e})=\frac{1}{2}(|C|+c(C))$.
\end{enumerate}

Now we define $f_\cC(\vec{e})$ to be the average of all $f_\omega(\vec{e})$ ranging over all $\omega\in \cC$. Therefore $f_\cC$ satisfies the four properties above as well. Now we claim that there is a unique function $h_\omega:V(G)\to \R$ for any $\omega\in \cC$ such that it satisfies properties (1), (2), and for any $\vec{e}=u\to v$,
\begin{equation}\label{eq:case}
h_\omega(u)-h_\omega(v)=\begin{cases}
    1-f_\cC(\vec{e}), &\text{if }\vec{e}\in\omega,\\
    -f_\cC(\vec{e}), &\text{if }\vec{e}\not\in\omega.
\end{cases}
\end{equation}

\underline{Uniqueness:} Since $\Gamma$ is simply connected, there exists a directed path $P=\vec{e_1},\dots,\vec{e_k}$ either between $v_i$ and $v_{-i}$, or between $v_i$ and a frozen vertex $u$. In the first case, $h_\omega(v_i)-h_\omega(v_{-i})=t-f_\cC(\vec{e_1})-\cdots-f_\cC(\vec{e_k})$ where $t$ is the number of $\vec{e_i}$ such that $\vec{e_i}\in\omega$. Combine with the fact that $h_\omega(v_i)+h_\omega(v_{-i})=0$, we know that $h_\omega(v_i)$ and $h_\omega(v_{-i})$ are uniquely determined. In the second case, $h_\omega(v_i)=h_\omega(v_i)-h_\omega(u)=t-f_\cC(\vec{e_1})-\cdots-f_\cC(\vec{e_k})$ for the same $t$, so again $h_\omega(v_{i})$ is uniquely determined.

\underline{Existence:} We only need to check that the definition of $h_\omega$ is self-consistent on every closed cycle $C$. Let $\vec{e_1},\dots,\vec{e_k}$ be edges on the closed cycle, then we only need to show that $0=t-f_\cC(\vec{e_1})-\cdots-f_\cC(\vec{e_k})$, where $t$ is the number of forward edges on $C$, which is also equal to $\sum_{i=1}^kf_\cC(\vec{e_k})$ by property (d) of $f_\cC$.

Finally, we show that $h_\omega$ satisfies properties (3), (4) and (5). For (3), \cref{eq:case} guarantees that $0\leq h_\omega(u)-h_\omega(v)\leq 1$ for $\vec{e}=u\to v\in \omega$. If $h_\omega(u)-h_\omega(v)=1$, then $f_\cC(\vec{e})=0$, which implies that $\vec{e}\notin \omega'$ for all $\omega'\in\cC$, which contradicts with the case $\omega'=\omega$ itself. If $h_\omega(u)-h_\omega(v)=0$, then $f_\cC(\vec{e})=1$, which implies that $\vec{e}\in \omega'$ for all $\omega'\in\cC$. In other words, the direction of the edge $\vec{e}$ has never changed in all orientations in $\cC$, so neither $u$ nor $v$ has ever been flipped in $\cC$, otherwise it will change the direction of $\vec{e}$. As a result, we can assume $u$ and $v$ are frozen vertices from the start, so $f_\omega(u)=f_\omega(v)=0$.

For (4), for any vertex $v_j\neq v_i,v_{-i}$, since we never flipped $v_j$ or $v_{-j}$, the path $P$ between $v_j$ and $v_{-j}$ (or a frozen vertex $u$) has the same circulation in $\omega$ and $\omega'$ (see the Uniqueness section). As a result, $h_\omega(v_j)=h_{\omega'}(v_j)$. Now assume $v_k$ is adjacent to $v_i$, and by \cref{eq:case}, since we flipped $v_i$, the evaluation of $h_\omega(v_i)$ changes by $1$. The same for $h_\omega(v_{-i})$. (5) is obvious from (4).

For (6), assume $h_\omega=h_{\omega'}$. Property (3) tells us that the orientation of most edges $e=(u,v)$ are known by comparing $h_\omega(u)$ and $h_\omega(v)$, unless $h_\omega(u)=h_\omega(v)=0$. In that case, the direction of the edge $\vec{e}$ has never changed in all orientations in $\cC$, which implies that $\omega=\omega'$.
\end{proof}

\begin{prop}\label{prop:path}
For any $\omega,\omega'\in \cC$, there exists a path $\omega=\omega_0,\omega_1,\cdots,\omega_n=\omega'$ in $\cC$ such that only the vertices in the set $\{v:h_\omega(v)\neq h_{\omega'}(v)\}$ are flipped.
\end{prop}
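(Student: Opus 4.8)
The plan is to prove \cref{prop:path} by induction on the nonnegative integer $N(\omega,\omega'):=\sum_{i=1}^{n}|h_\omega(v_i)-h_{\omega'}(v_i)|$, which is an integer by the integrality property of \cref{prop:height}. I will write $g(v):=h_\omega(v)-h_{\omega'}(v)$; this function is integer-valued, satisfies $g(v_{-i})=-g(v_i)$ by the symmetry property, and vanishes on every frozen vertex and on $v_0$. Since $N(\omega,\omega')=0$ exactly when $g\equiv 0$, the uniqueness property forces $\omega=\omega'$ in the base case and the empty path works. I will also record at the outset that $D:=\{v:h_\omega(v)\neq h_{\omega'}(v)\}=\{v:g(v)\neq 0\}$ contains only vertices $v_i$ with $i\notin S$ and $v_iv_{-i}\notin E(G)$: as $\cC$ is connected there is some flip-path from $\omega$ to $\omega'$, and if $i\in S$ or $v_iv_{-i}\in E(G)$ then $(v_i,v_{-i})$ is never a free source--sink pair, so it is never flipped along that path and $h$ stays constant at $v_i$, giving $g(v_i)=0$. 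Hence every vertex of $D$ is eligible to be flipped whenever it happens to be a source or a sink.

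For the inductive step I would take $\omega\neq\omega'$, set $M:=\max_v g(v)$ (a positive integer, since $g(v_{-i})=-g(v_i)$) and $A:=\{v:g(v)=M\}$, and prove the key claim that $A$ contains a source of $\omega$. Suppose $v\in A$ is not a source of $\omega$ and pick an edge $u\to v$ in $\omega$. The ``compatible with orientation'' property gives $0\le h_\omega(u)-h_\omega(v)<1$, and the value $0$ is excluded as in the proof of \cref{prop:height} (it would make the edge $uv$ keep its direction throughout $\cC$, forcing $g(u)=g(v)=0$ against $g(v)=M>0$), so $0<h_\omega(u)-h_\omega(v)<1$. Were $uv$ oriented $v\to u$ in $\omega'$, then $h_{\omega'}(v)-h_{\omega'}(u)\ge 0$ and so $g(u)-g(v)=(h_\omega(u)-h_\omega(v))+(h_{\omega'}(v)-h_{\omega'}(u))>0$, contradicting maximality of $M$; thus $u\to v$ in $\omega'$, and likewise $0<h_{\omega'}(u)-h_{\omega'}(v)<1$. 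Then $g(u)-g(v)=(h_\omega(u)-h_\omega(v))-(h_{\omega'}(u)-h_{\omega'}(v))$ lies in $(-1,1)$ and is an integer, hence equals $0$, so $u\in A$. Starting at any vertex of $A$ and repeatedly passing to an in-neighbour in $\omega$ therefore stays inside $A$, and since $\omega$ is acyclic this walk must terminate at a source $s\in A$.

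To close the induction I would flip at $s$: writing $s=v_i$, we have $i\notin S$ and $v_iv_{-i}\notin E(G)$ because $s\in A\subseteq D$, so $(v_i,v_{-i})$ is a free source--sink pair and flipping it produces $\omega_1\in\cC$. By the ``compatible with flips'' property the new difference function $g_1$ equals $g$ except that $g_1(v_i)=M-1\ge 0$ and $g_1(v_{-i})=-(M-1)$, so $N(\omega_1,\omega')=N(\omega,\omega')-2$ and $D_1:=\{v:g_1(v)\neq 0\}\subseteq D$. The induction hypothesis supplies a path from $\omega_1$ to $\omega'$ in $\cC$ flipping only vertices of $D_1\subseteq D$, and prepending the flip $\omega\to\omega_1$ (which flips $v_i\in D$) yields the required path.

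The step I expect to be the main obstacle is the source-existence claim for $A$: the crux is the observation that the unit-width bounds of the ``compatible with orientation'' property together with integrality of $g$ pin $g$ down to be constant along every edge sitting at the maximal level $M$, after which acyclicity of $\omega$ lets us walk uphill to an honest source. The remaining ingredients --- the degenerate case $h_\omega(u)=h_\omega(v)=0$, which is exactly the frozen/never-flipped situation already analysed in \cref{prop:height}, and the bookkeeping showing one flip drops $N$ by $2$ and does not enlarge $D$ --- are routine.
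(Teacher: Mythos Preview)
Your proof is correct and follows essentially the same induction-on-height-difference strategy as the paper; the only variation is that you maximize $g(v)=h_\omega(v)-h_{\omega'}(v)$ and walk backwards inside the maximizing set $A$ to locate a source, whereas the paper maximizes $\max\{h_\omega(v),h_{\omega'}(v)\}$ over vertices with $g(v)\neq 0$ and argues directly that this maximizer is already a source. One trivial slip: with your definition $N(\omega,\omega')=\sum_{i=1}^{n}|g(v_i)|$ summing over positive indices only, a single flip decreases $N$ by $1$, not $2$ (this does not affect the induction).
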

\begin{proof}
We induct on the value $\Delta(\omega,\omega'):=\sum_{v\in V(G)}|h_\omega(v)-h_{\omega'}(v)|$. This value is an integer because of property (5). When $\Delta(\omega,\omega')=0$ we have $h_\omega=h_{\omega'}$ so $\omega=\omega'$. Assume $\Delta(\omega,\omega')>0$. Assume $v$ is the vertex satisfying $h_\omega(v)\neq h_{\omega'}(v)$ and maximized $\max\{h_\omega(v), h_{\omega'}(v)\}$. WLOG assume $h_\omega(v)>h_{\omega'}(v)$. Obviously $h_\omega(v)>0$ (otherwise replace $v=v_i$ with $v_{-i}$). We claim that $v$ is a source in $\omega$. Otherwise there exists some $u\to v$ in $\omega$. As a result $h_\omega(u)>h_\omega(v)$. By maximality, we must have $h_\omega(u)=h_{\omega'}(u)$, but then $h_{\omega'}(u)-h_{\omega'}(v)>h_\omega(v)-h_{\omega'}(v)\geq 1$ by (5), which contradicts with (3).

Now we can flip $v$ in $\omega$ to obtain $\omega''$, and the value of $\Delta(\omega'',\omega')$ decreases. By induction hypothesis, there is a path between $\omega'',\omega'$, which concludes the proof.
\end{proof}

\begin{corollary}\label{cor:gamma1}
For any $\omega,\omega'\in \cG(\Gamma_j)\cap\cC$, they belong to the same connected component in $\text{Comp}(\cG(\Gamma_j)\cap\cC)$ if and only if $h_\omega(v_j)=h_{\omega'}(v_j)$.
\end{corollary}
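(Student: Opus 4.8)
The plan is to obtain \cref{cor:gamma1} directly from property~(4) of the height functions in \cref{prop:height} together with \cref{prop:path}; no new construction is required, only bookkeeping about which vertices get flipped.

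For the forward ("only if") direction, I would note that every edge of $\cG(\Gamma_j)\cap\cC$ joins two orientations differing by an $(S\cup\{j\})$-flip, i.e.\ a flip at a free source--sink pair $(v_i,v_{-i})$ with $i\neq j$. By property~(4) of \cref{prop:height}, such a flip changes $h_\bullet(v_i)$ and $h_\bullet(v_{-i})$ by $\pm 1$ and leaves every other height unchanged; in particular it leaves $h_\bullet(v_j)$ unchanged. Hence the function $\omega\mapsto h_\omega(v_j)$ is constant along any path in $\cG(\Gamma_j)\cap\cC$, so it is constant on each connected component. If $\omega$ and $\omega'$ lie in the same component, then $h_\omega(v_j)=h_{\omega'}(v_j)$.

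For the backward ("if") direction, suppose $h_\omega(v_j)=h_{\omega'}(v_j)$. By property~(2) we also get $h_\omega(v_{-j})=-h_\omega(v_j)=-h_{\omega'}(v_j)=h_{\omega'}(v_{-j})$, so neither $v_j$ nor $v_{-j}$ lies in the set $\{v:h_\omega(v)\neq h_{\omega'}(v)\}$. Apply \cref{prop:path} to obtain a path $\omega=\omega_0,\omega_1,\dots,\omega_n=\omega'$ inside $\cC$ in which only vertices of that set are ever flipped; consequently no step of this path flips $v_j$ or $v_{-j}$. Since consecutive orientations $\omega_t,\omega_{t+1}$ are adjacent in $\cC$, they differ by an $S$-flip, and because that flip avoids the index $j$ it is in fact an $(S\cup\{j\})$-flip. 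Therefore the entire path lies in $\cG(\Gamma_j)\cap\cC$ and connects $\omega$ to $\omega'$, so they are in the same component.

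The main point requiring care — and really the only one — is the last step of the backward direction: the path supplied by \cref{prop:path} is a priori only a path of $S$-flips, and one must verify it consists entirely of $(S\cup\{j\})$-flips. This is exactly where the hypothesis $h_\omega(v_j)=h_{\omega'}(v_j)$ is used, together with the symmetry property~(2) to rule out a flip at $v_{-j}$. Beyond this, the argument is short because all of the substantive work has already been absorbed into the construction of the height functions and into \cref{prop:path}.
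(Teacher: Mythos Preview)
Your proof is correct and follows the same approach as the paper: the paper's one-line argument invokes \cref{prop:path} to obtain a path avoiding the flip at $v_j$, while you make the same deduction explicit and additionally spell out the ``only if'' direction via property~(4), which the paper leaves implicit. Your care in using property~(2) to rule out flipping $v_{-j}$ is a detail the paper suppresses but is exactly right.
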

\begin{proof}
Because of \cref{prop:path}, we can find a path from $\omega$ to $\omega'$ that avoids flipping $v_j$, so the path lives in $\cG(\Gamma_j)\cap\cC$.
\end{proof}

\begin{corollary}\label{cor:gamma2}
For any $\omega,\omega'\in \cG(\Gamma-j)\cap\cC$, they belong to the same connected component in $\text{Comp}(\cG(\Gamma-j)\cap\cC)$ if and only if $h_\omega(v_j)=h_{\omega'}(v_j)$.
\end{corollary}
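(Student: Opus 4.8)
\textbf{Proof proposal for \cref{cor:gamma2}.}

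The plan is to mimic the proof of \cref{cor:gamma1}, but with care for the subtlety that $\cG(\Gamma-j)\cap\cC$ is an induced subgraph of $\cG(\Gamma)$ on a \emph{smaller} vertex set: we have identified $\cG(\Gamma-j)$ with the set of those $\omega\in\cC$ in which $v_j$ is a source and $v_{-j}$ a sink, equipped with $(S\cup N(j))$-flips. The forward direction is immediate: if $\omega$ and $\omega'$ are joined by an $(S\cup N(j))$-flip, the flip is at some free source-sink pair $(v_\ell,v_{-\ell})$ with $\ell\notin S\cup N(j)$, so in particular $\ell\neq j$ and $\ell$ is not a neighbour of $j$; by property (4) of the height functions, $h_\omega(v_j)$ is unchanged by such a flip (the flip only changes heights at $v_\ell$ and $v_{-\ell}$). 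Iterating along a path in $\cG(\Gamma-j)\cap\cC$ gives $h_\omega(v_j)=h_{\omega'}(v_j)$ whenever $\omega,\omega'$ lie in the same component.

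For the reverse direction, suppose $\omega,\omega'\in\cG(\Gamma-j)\cap\cC$ satisfy $h_\omega(v_j)=h_{\omega'}(v_j)$; since both are in $\cG(\Gamma-j)$, $v_j$ is a source in each, forcing $h_\omega(v_j)$ to be the maximum height, so in fact all neighbours $v_k$ of $v_j$ satisfy $h_\omega(v_k)<h_\omega(v_j)$, and similarly at $v_{-j}$ the heights are strictly above $h_\omega(v_{-j})$; but we will not actually need the values at $N(j)$ to agree. Apply \cref{prop:path} inside $\cC$ to get a path $\omega=\omega_0,\dots,\omega_n=\omega'$ flipping only vertices in $\{v:h_\omega(v)\neq h_{\omega'}(v)\}$. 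Since $h_\omega(v_j)=h_{\omega'}(v_j)$, the vertices $v_j$ and $v_{-j}$ are never flipped along this path, so the path respects the $\{j\}$-freezing. The remaining point is that the path must also avoid flipping any $v_k$ with $k\in N(j)$; here is where I would argue that, because $v_j$ stays a source and $v_{-j}$ a sink throughout (its height being pinned at the global extreme), no neighbour of $v_j$ can ever become a source — a source among $N(j)$ would require an edge oriented out of it toward $v_j$, contradicting $v_j$ being a source — and dually no neighbour of $v_{-j}$ can become a sink; hence no flip in the path touches $V(N(j))$, so the path in fact lies in $\cG(\Gamma-j)\cap\cC$.

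The main obstacle is precisely this last step: one must show that \cref{prop:path}'s path, which is only guaranteed to avoid vertices whose height is unchanged, automatically avoids all of $V(N(j))$ once the endpoints lie in $\cG(\Gamma-j)$. I expect this to follow cleanly from the observation that an orientation lies in (our copy of) $\cG(\Gamma-j)$ exactly when $v_j$ is a source and $v_{-j}$ a sink, that this condition is equivalent to $h(v_j)$ attaining the maximal height $\max_v h(v)$ (using property (3): $v_j$ being a source means every incident edge points out, i.e. $h(v_j)\geq h(v_k)$ for all neighbours, which by the same token means no vertex can exceed it along a monotone chain), and that \cref{prop:path} only ever flips a current source down or a current sink up, never creating a height exceeding the common value $h_\omega(v_j)=h_{\omega'}(v_j)$; therefore $v_j$ remains the unique top and every intermediate orientation still lies in $\cG(\Gamma-j)\cap\cC$. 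Once this is in hand, \cref{cor:gamma2} follows just as \cref{cor:gamma1} did.
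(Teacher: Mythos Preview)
Your forward direction is fine, but the reverse direction has a genuine gap in the step you yourself flag as ``the main obstacle.''

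The argument ``no neighbour of $v_j$ can become a source, and no neighbour of $v_{-j}$ can become a sink, hence no flip touches $V(N(j))$'' does not go through. Take $\ell\in N(j)$ with $v_\ell$ adjacent to $v_j$. You are right that $v_\ell$ cannot be a source (it has the incoming edge $v_j\to v_\ell$), but nothing stops $v_\ell$ from being a \emph{sink}: then $v_{-\ell}$, which is adjacent to $v_{-j}$ and has the outgoing edge $v_{-\ell}\to v_{-j}$, can perfectly well be a source. The pair $(v_{-\ell},v_\ell)$ is then a legitimate source-sink pair, and flipping it reverses $v_jv_\ell$, destroying the source status of $v_j$. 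So your circular loop ``$v_j$ stays a source $\Rightarrow$ neighbours never flip $\Rightarrow$ $v_j$ stays a source'' breaks. Your backup claim, that $v_j$ being a source is equivalent to $h(v_j)=\max_v h(v)$, is also false: property~(3) only says $h(v_j)$ dominates its \emph{neighbours}, not all vertices; another source elsewhere in $G$ can have strictly larger height.

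The clean fix---and the paper's actual argument---is the observation you set aside: $h_\omega(v_j)=h_{\omega'}(v_j)$ \emph{does} force $h_\omega(u)=h_{\omega'}(u)$ for every $u\in V(N(j))$. Indeed, since $v_j$ is a source in both $\omega$ and $\omega'$, every incident edge $v_j\to v_k$ lies in both orientations, so by \eqref{eq:case} one has $h_\omega(v_j)-h_\omega(v_k)=1-f_\cC(v_j\to v_k)=h_{\omega'}(v_j)-h_{\omega'}(v_k)$, whence $h_\omega(v_k)=h_{\omega'}(v_k)$; the neighbours of $v_{-j}$ are handled symmetrically. Now \cref{prop:path} produces a path that flips only vertices whose heights differ, so it avoids $v_j$, $v_{-j}$, and all of $V(N(j))$. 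Since none of the edges incident to $v_j$ or $v_{-j}$ are ever reversed along this path, $v_j$ remains a source and $v_{-j}$ a sink at every step, and every flip is an $(S\cup N(j))$-flip; the path therefore lies in $\cG(\Gamma-j)\cap\cC$.
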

\begin{proof}
$h_\omega(v_j)=h_{\omega'}(v_j)$ implies that $h_\omega(u)=h_{\omega'}(u)$ for every $u\in N(j)$. Because of \cref{prop:path}, we can find a path from $\omega$ to $\omega'$ that avoids flipping $v_j$ and all vertices in $N(j)$, so the path lives in $\cG(\Gamma-j)\cap\cC$.
\end{proof}

\begin{proof}[Proof of \cref{lm:frozen}]
Assume $h_\omega(v_j)$ can hold $d$ different values $N_1<N_2<\cdots<N_d$ over all $\omega\in \cC$. Then by \cref{cor:gamma1} the number of connected components $\#\text{Comp}(\cG(\Gamma_j)\cap\cC)=d$. Now we claim that $h_\omega(v_j)$ can hold $d-1$ different values $N_2,\dots,N_d$ over all $\omega\in \cG(\Gamma-j)\cap\cC$. First, $h_\omega(v_j)\neq N_1$. Otherwise, we can flip $(v_j,v_{-j})$ to get $\omega'$, but $h_{\omega'}(v_j)=N_1-1<N_1$, which contradicts the fact that $N_1$ is the minimal possible value. On the other hand, for any $2\leq i\leq d$, there exists a path $\omega_0,\dots,\omega_n\in\cC$ such that $h_{\omega_0}(v_j)=N_i$ and $h_{\omega_n}(v_j)=N_1$. Find the last orientation $\omega_k$ in the path such that $h_{\omega_k}(v_j)=N_i$. Then $v_j$ must be a source in $\omega_k$, which implies that $\omega_k\in \cG(\Gamma-j)\cap\cC$. This concludes the proof of the claim. By \cref{cor:gamma2}, we have $\#\text{Comp}(\cG(\Gamma_j)\cap\cC)=d-1$. Therefore, \cref{eq:goal} holds.
\end{proof}

\begin{remark}
The height function $h_\omega$ in \cref{prop:height} is inspired by James Propp's construction of height functions for ordinary type A graphs in \cite{Propp_Latticestructure_02}. In Propp's paper, he used the height function to show that there is a poset structure on each equivalence class of $\text{Acyc}(G)/\sim$, which is defined by $\omega \gtrdot \omega'$ if we flip source $i(\neq u)$ to sink $i$ ($u$ is a given fixed vertex). Moreover, the poset is a lattice. As a result, each of the lattice has a unique maximum element: acyclic orientation of $G$ with a unique sink $u$. This maximum element gives us a \emph{representative object} for each equivalence class in $\text{Acyc}(G)/\sim$ in the type A case, which helps to set up concrete bijections. However, all the methods we just described do not work for type B symmetric graphs, because there is not an obvious poset structure, and even if we force a poset structure, the poset will not be a lattice and will not have a unique representative object. Therefore, it will be interesting to know if one can find a well-defined representative object through some other methods for each equivalence class in $\text{Acyc}(G)/\sim$ in the type B case, in order to have a bijective proof of \cref{lm:frozen}.
\end{remark}

\begin{remark}
Another open question raised by Alex Postnikov is, whether we can find a Lie theoretic analog of \cref{thm:main}. In other words, can we find a way to prove \cref{thm:main}  for both ordinary graphs and symmetric graphs simultaneously, while potentially generalizing to other Lie types.
\end{remark}

\printbibliography

\end{document}